\theoremstyle{plain}
\newtheorem{thm}{Theorem}
\newtheorem{lem}{Lemma}
\newtheorem{prop}{Proposition}
\newtheorem{cor}{Corollary}
\newtheorem{rmk}{Remark}
\newcommand{\svec}[2]{
\ensuremath{
\begin{pmatrix}
#1 \\ #2 \\
\end{pmatrix}}}
\providecommand{\N}{\mathbb{N}}
\providecommand{\R}{\mathbb{R}}
\providecommand{\Z}{\mathbb{Z}}
\providecommand{\C}{\mathbb{C}}
\providecommand{\Q}{\mathbb{Q}}
\providecommand{\eps}{\varepsilon}
\newcommand{\norm}[1]{\left\lVert #1 \right\rVert}
\newcommand{\brac}[1]{\left\{ #1 \right\}}
\renewcommand{\i}{\mathrm{i}}
\renewcommand{\L}[1]{L^{#1}(\R^N)}
\newcommand{\LL}[1]{L^{#1}(\R^N) \times L^{#1}(\R^N)}
\newcommand{\step}[2]{\vspace*{0.2cm} \underline{Step #1}: \textit{#2} \vspace*{0.1cm}}
\newcommand{\myref}{Ev\'{e}quoz and Weth }
\newcommand{\myre}{Ev\'{e}quoz }
\newcommand{\absatz}{\vspace*{0.5cm}}
\begin{document}

\allowdisplaybreaks

\title{Dual Variational Methods for a nonlinear Helmholtz system}

\author{Rainer Mandel and Dominic Scheider}
\address{R. Mandel\hfill\break
Karlsruhe Institute of Technology \hfill\break
Institute for Analysis \hfill\break
Englerstra{\ss}e 2 \hfill\break
D-76131 Karlsruhe, Germany}
\email{rainer.mandel@kit.edu}
\address{D. Scheider\hfill\break
Karlsruhe Institute of Technology \hfill\break
Institute for Analysis \hfill\break
Englerstra{\ss}e 2 \hfill\break
D-76131 Karlsruhe, Germany}
\email{dominic.scheider@kit.edu}
\date{\today}

\subjclass[2010]{Primary: 35J50, Secondary: 35J05}
\keywords{Nonlinear Helmholtz sytem, dual variational methods}

\begin{abstract}
	This paper considers a pair of coupled nonlinear Helmholtz equations
	\begin{align*}
	\begin{cases}
		-\Delta u - \mu u = a(x) \left( |u|^\frac{p}{2} + b(x) |v|^\frac{p}{2}  \right)
		|u|^{\frac{p}{2} - 2}u,
		\\
		-\Delta v - \nu v = a(x) \left( |v|^\frac{p}{2} + b(x) |u|^\frac{p}{2}  \right)
		|v|^{\frac{p}{2} - 2}v
	\end{cases}
	\end{align*}
	on $\R^N$ where $\frac{2(N+1)}{N-1} < p < 2^\ast$.
	The existence of nontrivial strong solutions in $W^{2, p}(\R^N)$ is established
	using dual variational methods. 
	The focus lies on necessary and sufficient conditions on the parameters 
	deciding whether or not both components of such solutions are nontrivial. 
\end{abstract}

\maketitle

\allowdisplaybreaks

\section{Introduction and Main Results}

In this paper we study a system of two coupled nonlinear Helmholtz equations as it arises in models of nonlinear optics. 
More specifically, we intend to find a pair of fully nontrivial, real-valued and strong solutions $(u, v) \in W^{2,p}(\R^N) \times W^{2,p}(\R^N)$, $N \geq 2$, of 
\begin{align}\label{eq_system}
	&\begin{cases}
		- \Delta u - \mu u = 
		a(x) \left( |u|^\frac{p}{2} + b(x) |v|^\frac{p}{2}  \right)
		|u|^{\frac{p}{2} - 2}u 						& \text{on } \R^N,
		\\
		- \Delta v - \nu v \, = a(x) \left( |v|^\frac{p}{2} + b(x) |u|^\frac{p}{2}  \right)
		|v|^{\frac{p}{2} - 2}v					  		& \text{on } \R^N,
		\\
		u, v \in L^p(\R^N)
	\end{cases}
\end{align}
with $\mu, \nu > 0$, $\frac{2(N+1)}{N-1} < p < 2^\ast$ and nonnegative, $\Z^N$-periodic coefficients $a, b \in L^\infty(\R^N)$.
Here $2^\ast$ denotes the critical Sobolev exponent, $2^\ast = \frac{2N}{N-2}$ for $N \geq 3$, $2^\ast = \infty$ for $N = 2$.
A solution $(u, v)$ of~\eqref{eq_system} is said to be semitrivial if either $u = 0$ or $v = 0$ and fully nontrivial if both $u \neq 0$ and $v \neq 0$.
Our aim is to find necessary and sufficient conditions for the existence of fully nontrivial solutions of~\eqref{eq_system}. 

\absatz

To our knowledge, systems of Helmholtz equations have not been discussed in the literature so far in contrast to the Schrödinger case where $\mu, \nu < 0$ in~\eqref{eq_system}; for a comparison with our results, we refer to the end of this introduction.
Likewise, there is much literature on nonlinear Schrödinger equations of the form
\begin{equation*}
	- \Delta w + \lambda w = Q(x) |w|^{p-2} w
	\qquad \text{on } \R^N
\end{equation*}
with some $\lambda > 0$;
for instance, Szulkin and Weth prove the existence of ground state solutions in the Sobolev space $H^1(\R^N)$ in Theorem~1.1 of~\cite{WethSzulkin} by constraint minimization on the associated Nehari set.
In contrast, the corresponding Helmholtz problem
\begin{equation}\label{eq_helmholtz}
	- \Delta w - \lambda w = Q(x) |w|^{p-2} w
	\qquad \text{on } \R^N
\end{equation}
has only been discussed during the past five years.
Since $\lambda > 0$ belongs to the essential spectrum of $- \Delta$, the Helmholtz case requires different concepts in order to handle oscillating solutions with slow decay which, in general, are not elements of $H^1(\R^N)$. In the radial case, such oscillation and decay properties are studied in~\cite{radial}.
\myref discuss the case of compactly supported $Q$ and $2 < p < 2^\ast$ in~\cite{EvequozWeth_real, EvequozWeth_branch}. In~\cite{EvequozWeth_real}, they study an exterior problem where the nonlinearity vanishes and knowledge about the far-field expansion of solutions is available. The remaining problem on a bounded domain is solved by variational techniques.
The approach in~\cite{EvequozWeth_branch} uses Leray-Schauder continuation with respect to the parameter $\lambda$ in order to find solutions of~\eqref{eq_helmholtz}.
We will follow the ideas of \myref presented in~\cite{Evequoz_plane, EvequozWeth}. They introduce a dual variational approach, transforming the Helmholtz equation~\eqref{eq_helmholtz} into
\begin{align*}
	|\bar{w}|^{p'-2}\bar{w} 
	= Q(x)^\frac{1}{p} (- \Delta - \lambda)^{-1} \left[ Q(x)^\frac{1}{p} \bar{w} \right]
	\quad
	\text{where }
	\bar{w}(x) = Q(x)^\frac{1}{p'} |w(x)|^{p-2} w(x).
\end{align*}
Here, the resolvent-type operator $(- \Delta - \lambda)^{-1}$ is obtained by the Limiting Absorption Principle of Guti\'{e}rrez, see the explanations before Theorem~6 in~\cite{Gutierrez}.
The resulting dual equation in $\L{p'}$ is variational; using the Mountain Pass Theorem, the authors prove the existence of a ground state $\bar{w} \in \L{p'}$ of the dual problem, which yields a strong solution  $w \in W^{2,q}(\R^N) \cap C^{1,\alpha}(\R^N)$, $q \in [p, \infty)$ and $\alpha \in (0, 1)$, of the Helmholtz equation~\eqref{eq_helmholtz}. Here  $\frac{2(N+1)}{N-1} < p < 2^\ast$ and $Q$ is assumed to be positive and either periodic or decaying at infinity. 
In the latter case, it is shown that infinitely many solutions exist; for periodic $Q$, \myre proves the corresponding statement in~\cite{Evequoz_periodic}. He also shows that the dual problem possesses a gound state if $Q$ is assumed to be the sum of a periodic and a decaying term.
\myre further generalizes these results in~\cite{Evequoz_orlicz}; for instance, it is shown that the dual variational techniques apply for any $p \in (2, 2^\ast)$ if $Q$ satisfies suitable integrability conditions. 
In~\cite{Evequoz_critical}, \myre and Ye\c{s}il prove the existence of a dual ground state in the critical case $p = 2^\ast$ for $N \geq 4$ and the non-existence for $N = 3$ where, again, $Q$ is assumed to be the sum of a decaying and a periodic term.
For continuous, nonnegative $Q$ and $\frac{2(N+1)}{N-1} < p < 2^\ast$, \myre proves existence, concentration and multiplicity of ground states of the dual problem in the high-frequency limit $\lambda \nearrow \infty$ in~\cite{Evequoz_high} based on a comparison of energies with a suitable limit problem.

\absatz

We will show that, under suitable assumptions on the coefficients $a$ and $b$, the dual variational approach and the existence results by \myref in~\cite{Evequoz_plane, EvequozWeth} extend to the case of the system~\eqref{eq_system}. To this end, we will introduce a dual formulation for the system~\eqref{eq_system} of the form
\begin{align}\label{eq_system_dual}
	\begin{cases}
		\partial_{\bar{s}} h(x, \bar{u}, \bar{v})
		= \Psi_\mu \ast \bar{u} 			
		& \text{on } \R^N,
		\\
		\partial_{\bar{t}} h(x, \bar{u}, \bar{v})
		= \Psi_\nu \ast \bar{v}		
		& \text{on } \R^N,
		\vspace*{0.2cm}
		\\
		\bar{u}, \bar{v} \in L^{p^\prime}(\R^N).
	\end{cases}
\end{align}
In the following section, the role of the convolution kernels
$\Psi_\mu, \Psi_\nu$, see also equations~(11),~(45) of~\cite{EvequozWeth}, will be explained. So will be the transformation involving $\bar{u}, \bar{v} \in L^{p^\prime}(\R^N)$,
\begin{equation}\label{eq_transform-ubaru}
\begin{split}
	\bar{u}(x) &:=a(x) \left( |u(x)|^\frac{p}{2} + b(x) |v(x)|^\frac{p}{2}  \right)
		|u(x)|^{\frac{p}{2} - 2} u(x), 
	\\
	\bar{v}(x) &:= a(x) \left( |v(x)|^\frac{p}{2} + b(x) |u(x)|^\frac{p}{2}  \right)
		|v(x)|^{\frac{p}{2} - 2} v(x)
\end{split}
\end{equation}
and a suitable function $h: \R^N \times \R \times \R \to \R$, cf. Proposition~\ref{prop_fbhb} below.
Notice that we use the notation $\bar{u}, \bar{v} \in L^{p^\prime}(\R^N)$ in place of $u, v \in L^{p}(\R^N)$ whenever we are working in the dual setting; it does not denote complex conjugation, which does not occur in this paper. 

\absatz

The dual system~\eqref{eq_system_dual} is variational; we introduce the corresponding energy functional
\begin{equation}\label{eq_Jmunub}
\begin{alignedat}{2}
	& J_{\mu\nu}: \quad 
	&& L^{p^\prime}(\R^N) \times L^{p^\prime}(\R^N) \to \R,
	\\
	& J_{\mu\nu}(\bar{u}, \bar{v})
	&& := 
	\int_{\R^N} h(x, \bar{u}, \bar{v}) \: \mathrm{d}x
		- \frac{1}{2} \int_{\R^N} \bar{u} \Psi_\mu \ast \bar{u} + \bar{v} \Psi_\nu \ast \bar{v} \: \mathrm{d}x
\end{alignedat}
\end{equation}
with mountain pass level
\begin{align}\label{eq_mplevel}
\begin{split}
		&c_{\mu\nu} := \inf_{\gamma\in\Gamma_{\mu\nu}} \sup_{0 \leq t \leq 1} 
		J_{\mu\nu}(\gamma(t))
		\\
		&\text{where }
		\Gamma_{\mu\nu} 
		:= \brac{\gamma \in C([0, 1], L^{p^\prime}(\R^N) \times L^{p^\prime}(\R^N)): \: \gamma(0) = 0, 
		J_{\mu\nu} (\gamma(1)) < 0}.
\end{split}
\end{align}
For the definition of $h$, we refer to Proposition~\ref{prop_fbhb}.
The main results will be proved under the following assumptions:
\begin{equation}\label{eq_assumptions}
\begin{gathered}
	N \geq 2, \quad
	\mu, \nu > 0, \quad
	\frac{2(N+1)}{N-1} < p < 2^\ast, 
	\\
	a, b \in L^\infty(\R^N) \text{ are }  [0, 1]^N\text{-periodic with }
	0 \leq b(x) \leq p-1, \: a(x) \geq a_0 > 0.
\end{gathered}
\end{equation}
We denote by $a_-, b_-$ the (essential) infimum and by $a_+, b_+$ the (essential) supremum of the functions $a$ and $b$, respectively.

\begin{thm}[Existence Theorem]\label{thm_existence}
	Assuming~\eqref{eq_assumptions},
	there exists a nontrivial critical point $(\bar{u}, \bar{v}) \in \LL{p^\prime}$ 
	of the functional $J_{\mu\nu}$ on the mountain pass level $c_{\mu\nu} > 0$  
	and $(u, v) := \nabla_{\bar{s}, \bar{t}} h(\,\cdot\, , \bar{u}, \bar{v})$ 
	is a strong solution of~\eqref{eq_system} with 
	$u, v \in W^{2,q}(\R^N) \cap C^{1, \alpha}(\R^N)$ 
	for all $q \in [p, \infty)$ and $\alpha \in (0, 1)$.
\end{thm}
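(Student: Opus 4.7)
The plan is to apply a mountain-pass argument to the dual functional $J_{\mu\nu}$ defined in~\eqref{eq_Jmunub} and then to recover a strong solution of the primal system~\eqref{eq_system} via the transformation $(u,v) = \nabla_{\bar{s},\bar{t}} h(\,\cdot\,, \bar{u},\bar{v})$. First I would check that $J_{\mu\nu}$ is of class $C^1$ on $\LL{p'}$: the growth of $h$ recorded in Proposition~\ref{prop_fbhb} controls the $h$-integral, while the boundedness of the convolutions $\Psi_\mu \ast\,\cdot\,$ and $\Psi_\nu \ast\,\cdot\,$ from $\L{p'}$ into $\L{p}$, which is Guti\'errez's refined Stein-Tomas estimate and the reason for assuming $p > \frac{2(N+1)}{N-1}$, controls the quadratic part. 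Since $p' < 2$, the $h$-integral behaves like $\norm{(\bar u,\bar v)}_{p'}^{p'}$ near the origin and dominates the quadratic convolution, so $J_{\mu\nu}$ is bounded below by a positive constant on a small sphere in $\LL{p'}$. To produce a path with negative end energy, I would pick some $\bar w \in \L{p'}$ with $\int_{\R^N} \bar w\, \Psi_\mu \ast \bar w\dx > 0$ (which exists because the symmetric part of $\Psi_\mu \ast$, inherited from the oscillatory outgoing fundamental solution of $-\Delta - \mu$, has a nontrivial positive quadratic form) and observe that $J_{\mu\nu}(t \bar w, 0) \to -\infty$ as $t \to \infty$. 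This gives $c_{\mu\nu} > 0$, and the mountain-pass theorem produces a Palais-Smale sequence $(\bar u_n, \bar v_n) \subset \LL{p'}$ at level $c_{\mu\nu}$.

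The main obstacle is the failure of the Palais-Smale condition, caused by the essential spectrum of $-\Delta$ together with the $\Z^N$-translation invariance of $J_{\mu\nu}$ inherited from the periodicity of $a$ and $b$. I would first exploit the asymptotic $p'$-homogeneity of $h$: combining $J_{\mu\nu}(\bar u_n, \bar v_n) \to c_{\mu\nu}$ with $\langle J_{\mu\nu}'(\bar u_n, \bar v_n), (\bar u_n, \bar v_n) \rangle = o(1)\norm{(\bar u_n, \bar v_n)}_{p'}$ shows that the quadratic form $\int \bar u_n \Psi_\mu \ast \bar u_n + \bar v_n \Psi_\nu \ast \bar v_n\dx$ is bounded, and hence so is $\int h(\,\cdot\,, \bar u_n, \bar v_n)\dx$, which in turn controls $\norm{(\bar u_n, \bar v_n)}_{p'}$. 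Next, Lions' non-vanishing lemma in $\L{p'}$ yields translations $k_n \in \Z^N$ such that $(\bar u_n(\,\cdot\, - k_n), \bar v_n(\,\cdot\, - k_n))$ converges weakly to some nonzero $(\bar u, \bar v) \in \LL{p'}$; the alternative of vanishing would force the $h$-integral along the sequence to converge to $0$, leading to $c_{\mu\nu} \leq 0$, contradicting the first step. Periodicity keeps $J_{\mu\nu}$ invariant under these integer translations, so the shifted sequence is still a Palais-Smale sequence, and passing to the limit via a.e.\ convergence of $\nabla h(\,\cdot\,, \bar u_n, \bar v_n)$ combined with the weak continuity of the convolution operators from $\L{p'}$ into $\L{p}$ shows that $(\bar u, \bar v)$ is a nontrivial critical point of $J_{\mu\nu}$ at level $c_{\mu\nu}$.

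Finally, for the regularity assertion I would set $(u, v) := \nabla_{\bar s, \bar t} h(\,\cdot\,, \bar u, \bar v) \in \LL{p}$ and read the Euler-Lagrange system~\eqref{eq_system_dual} as $u = \Psi_\mu \ast \bar u$ and $v = \Psi_\nu \ast \bar v$. An iteration of the $L^q \to L^s$ mapping properties of the resolvent convolutions established by \myref in~\cite{EvequozWeth}, combined with the transformation~\eqref{eq_transform-ubaru} to feed back integrability of $(u, v)$ into $(\bar u, \bar v)$, successively enlarges the Lebesgue exponents of the solution. Once $\bar u, \bar v$ lie in every $L^q(\R^N)$ with $q$ in a suitable range, interior $W^{2,q}$-Calder\'on-Zygmund estimates and Schauder theory applied to the Helmholtz equations $-\Delta u - \mu u = \bar u$, $-\Delta v - \nu v = \bar v$ upgrade $u, v$ to $W^{2,q}(\R^N) \cap C^{1, \alpha}(\R^N)$ for every $q \in [p, \infty)$ and $\alpha \in (0, 1)$, so that $(u, v)$ is a strong solution of~\eqref{eq_system}.
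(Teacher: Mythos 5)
Your overall strategy — mountain-pass geometry for the dual functional, a bounded Palais-Smale sequence, concentration-compactness via integer translations exploiting the periodicity of $a,b$, and a final regularity bootstrap — matches the paper's approach. But two steps in your middle paragraph paper over the genuine technical core of the argument, and as written they would not close the proof.

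First, the invocation of ``Lions' non-vanishing lemma in $L^{p'}$'' is not the correct tool. The quantity one needs to keep from vanishing is not $\|(\bar u_n,\bar v_n)\|_{p'}$ but the bilinear form $\int \bar u_n\,\Psi_\mu\ast\bar u_n + \bar v_n\,\Psi_\nu\ast\bar v_n\,\dx$, which is neither sign-definite nor equivalent to a norm: the kernel $\Psi_\lambda$ is oscillatory, so the form can vanish for $\bar u_n\neq 0$. The paper first shows this form tends to $\frac{2p}{p-2}c_{\mu\nu}>0$ along the PS sequence (via the identity relating $J_{\mu\nu}$ and $J_{\mu\nu}'[\cdot,\cdot]$), and then applies the \emph{Nonvanishing Theorem} of Ev\'equoz--Weth (Theorem~3.1 of~\cite{EvequozWeth}, resp.~\cite{Evequoz_plane}), a Helmholtz-specific result proving that boundedness away from zero of this resolvent quadratic form forces non-vanishing of the $L^{p'}$-mass on translated balls. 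Your alternative claim that vanishing ``would force the $h$-integral to converge to $0$, leading to $c_{\mu\nu}\leq 0$'' reverses the direction of implication actually needed and has no proof sketch; the difficult step is precisely to convert positivity of the oscillatory quadratic form into local $L^{p'}$-mass.

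Second, the step ``passing to the limit via a.e.\ convergence of $\nabla h(\,\cdot\,,\bar u_n,\bar v_n)$'' is asserted but not obtained. Weak convergence of $(\bar u_n,\bar v_n)$ in $\LL{p'}$ does not pass through the nonlinear map $\nabla_{\bar s,\bar t}h$, and there is no a.e.\ convergence for free. The paper's Step~2 is the mechanism that produces it: combining the Palais-Smale property with the \emph{compactness} of the operator $g\mapsto \mathds{1}_{B_R(0)}\Psi_\lambda\ast g$ from $\L{p'}$ to $\L{p}$ (Lemma~4.1 in~\cite{EvequozWeth}), one shows that $\mathds{1}_{B_R(0)}\nabla_{\bar s,\bar t}h(\,\cdot\,,\bar U_n,\bar V_n)$ is Cauchy in $\LL{p}$; only then does one extract a.e.\ convergence, invert $\nabla f(x,\,\cdot\,,\,\cdot\,)$ as a homeomorphism, and use the weak-limit uniqueness (together with the result of Jakszto cited in the paper) to identify the limit and deduce $(\bar U,\bar V)\neq(0,0)$. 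Without this compactness argument your passage to the limit is unjustified. Your final regularity paragraph is in the right spirit (the paper delegates to Lemma~4.3 of~\cite{EvequozWeth}), though you should be aware that the bootstrap must be global on $\R^N$ rather than merely interior, using the $L^{q'}\to L^{q}$ mapping properties of $\Psi_\lambda\ast$ established there.
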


\begin{rmk}\label{rmk_scalar}
Consider the scalar functional
$I_\mu: \: L^{p^\prime}(\R^N) \to \R$ given by $I_\mu(\bar{u}) := J_{\mu\nu}(\bar{u}, 0)$.
Lemma~\ref{lem_prop_hb} (d) below provides the formula
\begin{equation}\label{eq_Ilambda}
	I_{\mu} (\bar{u}) 
	= \frac{1}{p^\prime} \int_{\R^N} a(x)^{1-p^\prime} |\bar{u}|^{p^\prime} \: \mathrm{d}x
	- \frac{1}{2} \int_{\R^N} \bar{u} \Psi_\mu \ast \bar{u} \: \mathrm{d}x.
\end{equation}
The results by \myref in~\cite{Evequoz_plane, EvequozWeth}
yield a critical point $\bar{u}$ of $I_\mu$ at the scalar mountain pass level $c_\mu$ and a corresponding solution $u = a^{1 - p'} |\bar{u}|^{p' - 2} \bar{u} \in W^{2,q}(\R^N) \cap C^{1, \alpha}(\R^N)$, $q \in [p, \infty)$ and $\alpha \in (0, 1)$, of the scalar Helmholtz equation
\begin{equation}\label{eq_helmholtz_scal}
	- \Delta u - \mu u = a(x) |u|^{p-2} u
	\quad
	\text{on } \R^N.
\end{equation}
\end{rmk}

Any critical point of the functional $J_{\mu\nu}$ on the level $c_{\mu\nu}$ will henceforth be referred to as a dual ground state of $J_{\mu\nu}$. Notice that Theorem~\ref{thm_existence} yields the existence of a nontrivial dual ground state $(\bar{u}, \bar{v})$ of~\eqref{eq_system_dual}; it does not exclude the semitrivial case where either $\bar{u} = 0$ or $\bar{v} = 0$. Such a semitrivial dual ground state corresponds to a solution of the scalar problem; thus we now discuss under which conditions we find fully nontrivial dual ground states of the system~\eqref{eq_system_dual}. This amounts to statements about the occurrence of fully nontrivial solutions of~\eqref{eq_system}. 
Indeed, for pairs $(\bar{u}, \bar{v}) \in \LL{p'}$ and $(u, v) \in \LL{p}$ 
satisfying~\eqref{eq_transform-ubaru}, a short calculation shows 
that $(u, v)$ is semitrivial (resp. fully nontrivial) if and only if
$(\bar{u}, \bar{v})$ is semitrivial (resp. fully nontrivial). 

\begin{thm}\label{thm_c_mumu_plus-1}
Assume conditions~\eqref{eq_assumptions} to hold.
\begin{itemize}
\item[(a)] If $\quad 2 < p < 4		\quad \text{and} \quad		b_- > 0, \quad$
then every dual ground state of the functional $J_{\mu\nu}$ is fully nontrivial.
\item[(b)] If $\quad p \geq 4	\quad \text{and}\quad	b_- > \frac{a_+}{a_-} \: 2^\frac{p-2}{2} - 1,
\quad$
then there exists $\delta > 0$ with the property that, for $\mu, \nu > 0$ with $\left| \sqrt{\frac{\mu}{\nu}} - 1 \right| < \delta$, every dual ground state of $J_{\mu\nu}$ is fully nontrivial.
\end{itemize}
\end{thm}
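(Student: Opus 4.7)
The plan is to argue by contradiction. Assume $(\bar u,\bar v)$ is a dual ground state of $J_{\mu\nu}$ with, say, $\bar v = 0$. By Remark~\ref{rmk_scalar}, $\bar u$ is then a nontrivial critical point of $I_\mu$ with $I_\mu(\bar u) = c_{\mu\nu}$, and the scalar theory of \myref identifies $c_\mu$ as the least nontrivial critical value of $I_\mu$, so $c_{\mu\nu} \ge c_\mu$; symmetrically, $\bar u = 0$ forces $c_{\mu\nu} \ge c_\nu$. It therefore suffices in each of the two cases to construct a path $\gamma\in\Gamma_{\mu\nu}$ with $\max_{t\in[0,1]} J_{\mu\nu}(\gamma(t)) < \min\{c_\mu,c_\nu\}$, which produces the contradiction $c_{\mu\nu} < \min\{c_\mu,c_\nu\}$.

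For part~(a), denote by $\bar u_\mu,\bar u_\nu$ the scalar ground states. I would use the family $\gamma_s(t) := (tR\bar u_\mu,\, stR\bar u_\nu)$ on $[0,1]$, with $R$ chosen large enough that $J_{\mu\nu}(\gamma_s(1))<0$ uniformly in small $s\ge 0$ (possible because $p'<2$). The essential input is the near-zero behaviour of the dual nonlinearity: inverting $\bar t = \partial_t H(x,s,t)$ for small $|t|$, where the cross-term $a(x)b(x)|s|^{p/2}|t|^{p/2-1}\mathrm{sign}(t)$ dominates thanks to $b_->0$, gives $|t| \asymp |\bar t|^{2/(p-2)}$ and, after integration,
\[
h(x,\bar s,\bar t) - h(x,\bar s,0) = C(x,\bar s)\,|\bar t|^{p/(p-2)} + o\bigl(|\bar t|^{p/(p-2)}\bigr)
\]
with an explicit $C(x,\bar s)>0$. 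Since $p<4$ gives $p/(p-2)>2$, this contributes only an $o(s^2)$ correction to $J_{\mu\nu}(\gamma_s(t))$, whereas the quadratic part of $J_{\mu\nu}$ contributes the strictly negative coefficient $-\tfrac{t^2R^2}{2}\int a^{1-p'}|\bar u_\nu|^{p'}dx$ (using the Nehari identity for $\bar u_\nu$). Moreover, $\partial_{\bar t}h(x,\bar s,0)=0$ makes $\partial_s J_{\mu\nu}(\gamma_s(t))|_{s=0}$ vanish identically in $t$, hence also the mixed derivative $\partial_s\partial_t J_{\mu\nu}|_{s=0}$. An implicit-function argument around the strict maximum $t_\ast$ of the concave profile $t\mapsto I_\mu(tR\bar u_\mu)$ then produces $\max_{t}J_{\mu\nu}(\gamma_s(t)) = c_\mu - c_1 s^2 + o(s^2)$ with $c_1>0$, and small $s>0$ yields the contradiction. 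Exchanging $(\mu,\bar u_\mu)\leftrightarrow(\nu,\bar u_\nu)$ rules out the other semitrivial case.

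For part~(b) the exponent $p/(p-2)\le 2$ makes the above perturbation useless, so I would test along the diagonal $t\mapsto tR(\bar u_\mu,\bar u_\mu)$. Because $H(x,s,t)$ is symmetric in $(s,t)$, the Legendre maximizer at $(\bar s,\bar s)$ satisfies $s=t$, which after a brief computation yields the explicit formula $h(x,\bar s,\bar s) = \tfrac{2}{p'}(a(1+b))^{1-p'}|\bar s|^{p'}$. Setting $\tilde A := \int(a(1+b))^{1-p'}|\bar u_\mu|^{p'}dx$, $A := \int a^{1-p'}|\bar u_\mu|^{p'}dx$ and $B_\tau := \int\bar u_\mu\Psi_\tau*\bar u_\mu\,dx$, this gives
\[
J_{\mu\nu}(\lambda\bar u_\mu,\lambda\bar u_\mu) = \tfrac{2\lambda^{p'}}{p'}\tilde A - \tfrac{\lambda^2}{2}(B_\mu+B_\nu).
\]
Maximising in $\lambda>0$, invoking the scalar Nehari identity $B_\mu = A$ and the resulting formula $c_\mu = \tfrac{2-p'}{2p'}A$, an elementary manipulation leads to
\[
\frac{\max_{\lambda>0}J_{\mu\nu}(\lambda\bar u_\mu,\lambda\bar u_\mu)}{c_\mu} = 2\Bigl(\tfrac{\tilde A}{A}\Bigr)^{2/(2-p')}\Bigl(\tfrac{2A}{A+B_\nu}\Bigr)^{p'/(2-p')}.
\]
The pointwise bound $(1+b(x))^{1-p'}\le (1+b_-)^{1-p'}$ gives $\tilde A/A \le (1+b_-)^{1-p'}$, and a short exponent check using $p'-1 = 1/(p-1)$ shows that in the symmetric case $\mu=\nu$ (where the second factor equals $1$) the displayed ratio is $<1$ precisely when $1+b_- > 2^{(p-2)/2}$.

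The main obstacle is to absorb the second factor when $\mu\ne\nu$. I would use the Helmholtz scaling $y = \sqrt{\mu/\nu}\,x$, which transforms $\Psi_\nu$ into a multiple of $\Psi_\mu$ and, combined with the a priori bounds $a_-\le a(x)\le a_+$, allows one to compare $B_\nu(\bar u_\mu)$ with $A = B_\mu(\bar u_\mu)$ up to a factor involving $a_+/a_-$ and $|\sqrt{\mu/\nu}-1|$. Running the symmetric comparison against $c_\nu$ starting from $\bar u_\nu$ produces the same quotient with $\mu$ and $\nu$ exchanged; uniformly absorbing both mismatches is precisely what forces the strengthened hypothesis $1+b_- > \tfrac{a_+}{a_-}\,2^{(p-2)/2}$. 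Continuity of all ingredients in $\sqrt{\mu/\nu}$ then produces some $\delta>0$ such that the diagonal path beats both scalar levels whenever $|\sqrt{\mu/\nu}-1|<\delta$, closing the contradiction.
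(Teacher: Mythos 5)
Your overall framework is correct: Lemma~\ref{lem_scalarmp}(ii) reduces both parts to proving the strict inequality $c_{\mu\nu}<\min\{c_\mu,c_\nu\}$, and the strategy of testing the inf-sup characterization (or, equivalently, a path) against a perturbation of a scalar ground state is exactly what the paper does. But both of your concrete perturbation arguments have gaps.

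In part (a), the pointwise expansion $h(x,\bar s,\bar t)-h(x,\bar s,0)=C(x,\bar s)\,|\bar t|^{p/(p-2)}+o(|\bar t|^{p/(p-2)})$ is correct as written, but the constant behaves like $C(x,\bar s)\asymp |\bar s|^{p'-p/(p-2)}$ with a strictly \emph{negative} exponent in $|\bar s|$. When you plug in $\bar s=tR\bar u_\mu(x)$ and $\bar t=stR\bar u_\nu(x)$ with two \emph{different} functions in the two slots, the resulting integrand $\sim |\bar u_\mu|^{-p/((p-1)(p-2))}|\bar u_\nu|^{p/(p-2)}$ is singular on the zero set of $\bar u_\mu$ (which is not small: Helmholtz solutions oscillate), and you give no argument that the $o(|\bar t|^{p/(p-2)})$ term integrates to $o(s^2)$. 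The crude bound from Lemma~\ref{lem_prop_hb}(e) only gives $|\bar t|^{p'}$, and $p'<2$, which does not beat the quadratic term. The paper sidesteps this entirely by putting the \emph{same} function in both slots --- it tests with $(\bar w,\,\eta\bar w\,\mathds{1}_{B_r(x_0)})$ so that the ratio $|\bar t|/|\bar s|\in\{0,\eta\}$ is bounded, and the bound $h(x,\bar s,\bar t)\le a(x)^{1-p'}h_-(\bar s,\bar t)$ together with the sup formula of Lemma~\ref{lem_prop_hb}(a) factors cleanly into $\norm{a^{-1/p}\bar w}_{p'}^{p'}\cdot\Phi(\eta)^{p'}$ with an $x$-independent $\Phi(\eta)=1-\tfrac12\eta^2 q+o(\eta^2)$. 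The cutoff $\mathds{1}_{B_r(x_0)}$ is what guarantees the extra positive contribution $\eta^2 q>0$ in the denominator, replacing your use of the Nehari identity for $\bar u_\nu$; it avoids having to control $\int\bar u_\mu\Psi_\nu\ast\bar u_\mu$, whose sign is unknown.

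In part (b), testing along the diagonal $(\lambda\bar u_\mu,\lambda\bar u_\mu)$ runs into the same issue: the denominator contains $\int\bar u_\mu\Psi_\nu\ast\bar u_\mu$, which you must compare with $\int\bar u_\mu\Psi_\mu\ast\bar u_\mu$. Your proposed fix is the Helmholtz scaling $y=\sqrt{\mu/\nu}\,x$, but rescaling $\bar u_\mu$ (the ground state for the \emph{variable} $a$) destroys the $\Z^N$-periodicity of $a$ and does not yield a controlled comparison; this is precisely the step you leave as ``a short calculation.'' The paper instead works with the dual ground state $\bar z$ of $I_1$ for $a\equiv 1$, which is scale-invariant, and uses the pair $(\bar z_\mu,\bar z_\nu)$ with the \emph{two different} Helmholtz rescalings from \eqref{eq_zRescale}; the identity \eqref{eq_LemContB} then renders the denominator exactly $2\int\bar z\Psi_1\ast\bar z$, with no cross term to estimate. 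The variable coefficient $a$ is absorbed at the end via \eqref{eq_cmumu-a}, which is where the factor $a_+/a_-$ comes from. Without the switch to the $a\equiv 1$ ground state and the matched rescalings $(\bar z_\mu,\bar z_\nu)$, your continuity argument does not have a well-defined quantity to be continuous in.
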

\begin{thm}\label{thm_c_mumu_plus-2}
	Assume~\eqref{eq_assumptions} as well as
	\begin{align*}
		p \geq 4		\quad \text{and} \quad		0 \leq b_+ < 2^{\frac{p-2}{2}} - 1.
	\end{align*}
	Then every dual ground state of the functional $J_{\mu\nu}$ is semitrivial.
\end{thm}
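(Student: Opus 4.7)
The plan is to show that $c_{\mu\nu}=\min(c_\mu,c_\nu)$ and that no fully nontrivial critical point of $J_{\mu\nu}$ attains this level. Combined with the existence of a dual ground state from Theorem~\ref{thm_existence}, this forces every dual ground state to be semitrivial.

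First I would establish the upper bound $c_{\mu\nu}\le\min(c_\mu,c_\nu)$: given a scalar dual ground state $\bar u_\mu$ of $I_\mu$ from Remark~\ref{rmk_scalar}, the linear path $t\mapsto(t\bar u_\mu,0)$ lies in $\Gamma_{\mu\nu}$ with $J_{\mu\nu}(t\bar u_\mu,0)=I_\mu(t\bar u_\mu)$, so $c_{\mu\nu}\le\max_{t}I_\mu(t\bar u_\mu)=c_\mu$, and symmetrically $c_{\mu\nu}\le c_\nu$. In particular the semitrivial critical points $(\bar u_\mu,0)$ and $(0,\bar v_\nu)$ already realize the level $\min(c_\mu,c_\nu)\ge c_{\mu\nu}$.

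For the strict lower bound at any fully nontrivial critical point $(\bar u_*,\bar v_*)$, I would pass to the primal formulation. In view of~\eqref{eq_transform-ubaru}, $h$ is the Legendre transform in $(u,v)$ of the $p$-homogeneous convex potential $F(x,u,v)=\tfrac{a(x)}{p}\bigl(|u|^p+|v|^p+2b(x)|u|^{p/2}|v|^{p/2}\bigr)$, so by Euler's identity $h(x,\bar u,\bar v)=(p-1)F(x,u,v)$ along the graph of $\nabla_{u,v}F$, and a short calculation using the coupled Nehari identity gives
\begin{equation*}
J_{\mu\nu}(\bar u_*,\bar v_*)=\tfrac{p-2}{2p}\int_{\R^N}a(x)\bigl(|u_*|^p+|v_*|^p+2b(x)|u_*|^{p/2}|v_*|^{p/2}\bigr)\,dx,
\end{equation*}
and analogously $c_\mu=\tfrac{p-2}{2p}\int a|u_\mu|^p\,dx$. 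The decisive model is the synchronized case $\bar u=\bar v=\bar w$ for $\mu=\nu$: the identity $h(x,\bar w,\bar w)=\tfrac{2}{p'}(a(1+b))^{1-p'}|\bar w|^{p'}$ yields $J_{\mu\mu}(\bar w,\bar w)=2\,I_\mu^{(a(1+b))}(\bar w)$, where $I_\mu^{(Q)}$ denotes the scalar dual functional with coefficient $Q$ in place of $a$. Combined with the rescaling $c_\mu^{(a(1+b))}=(1+b)^{-2/(p-2)}c_\mu^{(a)}$ (valid for constant coefficients), the hypothesis $b_+<2^{(p-2)/2}-1$ translates exactly into $c_\mu^{(a)}<2\,c_\mu^{(a(1+b))}$, so every synchronized fully nontrivial competitor sits strictly above the semitrivial level. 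I would then promote this synchronized inequality to general fully nontrivial critical points by projecting $\bar u_*$ onto the scalar Nehari manifold $\mathcal{N}_\mu=\{\bar u:\int a^{1-p'}|\bar u|^{p'}dx=K_\mu(\bar u)\}$ via $\rho>0$ with $\rho^{p'-2}\int a^{1-p'}|\bar u_*|^{p'}dx=K_\mu(\bar u_*)$, using the partial Nehari conditions $\int\bar u_*\,\partial_{\bar s}h(\cdot,\bar u_*,\bar v_*)\,dx=K_\mu(\bar u_*)$, $\int\bar v_*\,\partial_{\bar t}h(\cdot,\bar u_*,\bar v_*)\,dx=K_\nu(\bar v_*)$ together with the strict pointwise subadditivity $h(x,\bar u_*,\bar v_*)<h(x,\bar u_*,0)+h(x,0,\bar v_*)$ (valid wherever $b(x)u_*v_*\neq 0$) to conclude $c_\mu\le I_\mu(\rho\bar u_*)<J_{\mu\nu}(\bar u_*,\bar v_*)=c_{\mu\nu}$, contradicting $c_{\mu\nu}\le c_\mu$.

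The main obstacle will be making the subadditivity gap quantitatively strong enough in the general, non-synchronized, spatially varying coefficient setting. For $p>4$ the Legendre transform $h$ fails to be twice differentiable on the axes $\{\bar s=0\}\cup\{\bar t=0\}$ because the Hessian of $F$ degenerates in the $v$-direction at $v=0$, which rules out Taylor expansion and forces a pointwise convex-analytic estimate of the form
\[
h(x,\bar s,0)+h(x,0,\bar t)-h(x,\bar s,\bar t)\ge c\bigl(b(x)\bigr)\bigl(|\bar s|\,|\bar t|\bigr)^{p'/2}
\]
with a constant $c(b)$ that is positive exactly when $b<2^{(p-2)/2}-1$. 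Extracting this sharp threshold and integrating the bound against the scalar ground state profile so as to close the projection argument is the technical heart of the proof.
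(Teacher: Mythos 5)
Your reduction to $c_{\mu\nu}=\min\{c_\mu,c_\nu\}$ plus a lower-bound comparison at fully nontrivial critical points is the right high-level strategy, and the synchronized-case computation $c_\mu^{(a)}<2\,c_\mu^{(a(1+b))}\iff b<2^{(p-2)/2}-1$ correctly identifies the threshold. However, your proposed mechanism for passing from the synchronized case to general fully nontrivial critical points --- a Nehari projection supported by a pointwise subadditivity gap estimate --- contains a genuine error. First, the conjectured bound $h(x,\bar s,0)+h(x,0,\bar t)-h(x,\bar s,\bar t)\ge c(b)(|\bar s||\bar t|)^{p'/2}$ cannot hold with $c(b)>0$ for all $0\le b<2^{(p-2)/2}-1$: at $b=0$ the potential $f$ is separable, so $h(x,\bar s,\bar t)=h(x,\bar s,0)+h(x,0,\bar t)$ identically and the left-hand side vanishes, while the theorem must still be proved for $b_+\in[0,2^{(p-2)/2}-1)$. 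Second, and more structurally, subadditivity of $h$ bounds $\int h(x,\bar u_*,\bar v_*)\,dx$ from \emph{above}, which bounds $J_{\mu\nu}(\bar u_*,\bar v_*)=\tfrac{p-2}{2(p-1)}\int h\,dx$ from above; but for the desired chain $c_\mu\le I_\mu(\rho\bar u_*)<J_{\mu\nu}(\bar u_*,\bar v_*)$ you need a \emph{lower} bound. Worse, when you compute the Nehari multiplier you find $\rho^{p'-2}\int a^{1-p'}|\bar u_*|^{p'}\,dx=\int\bar u_*\,\partial_{\bar s}h(\cdot,\bar u_*,\bar v_*)\,dx\le\int a^{1-p'}|\bar u_*|^{p'}\,dx$, hence $\rho\ge1$ (since $p'-2<0$), so $I_\mu(\rho\bar u_*)\ge I_\mu(\bar u_*)$ and the naive comparison does not close without a quantitative superadditivity-type input. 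So the approach as sketched has a directional gap at its technical core.

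The paper's actual argument proceeds entirely in the dual formulation and uses a different key estimate: the integral Minkowski-type \emph{lower} bound
\begin{equation*}
\int_{\R^N} h(x,\bar u,\bar v)\,\mathrm{d}x\;\ge\;h_+\!\left(\|a^{-1/p}\bar u\|_{p'},\,\|a^{-1/p}\bar v\|_{p'}\right),
\end{equation*}
which is equation~\eqref{eq_auxiliary_h-below} and is proved by testing the Legendre supremum with explicit $s_x,t_x$ proportional to $a(x)^{1-p'}|\bar u(x)|^{p'-2}\bar u(x)$ (resp.\ $\bar v$) and applying H\"older's inequality to the cross term. Together with the bound $\bigl(\int\bar w\,\Psi_\lambda*\bar w\bigr)_+\le\bigl(\tfrac{2p}{p-2}c_\lambda\bigr)^{-(p-2)/p}\|a^{-1/p}\bar w\|_{p'}^2$ (which is simply $E_\lambda\ge c_\lambda$) and the homogeneity of $h_+$, this reduces the inequality $F_{\mu\nu}(\bar u,\bar v)\ge\min\{c_\mu,c_\nu\}$ to the one-variable fact~\eqref{eq_auxiliary_mini} on the ratio $\eta\mapsto(1+\eta^2)^{1/2}/(1+(2^{p/2}-2)\eta^{p/2}+\eta^p)^{1/p}$, and the strictness $2b_+<2^{p/2}-2$ forces $\eta_0=0$ via equality in all the intermediate steps. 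So the quantitative input replacing your subadditivity gap is the lower bound on $\int h$, not an upper bound; this is the step you would need to find and prove to make your route (or any route) work.
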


In the special case of constant coefficients $a, b$ and $\mu = \nu$ we provide a full characterization of the parameter ranges where semitrivial and fully nontrivial dual ground state solutions occur:
\begin{cor}\label{cor_plus}
Assume that conditions~\eqref{eq_assumptions} hold with constant coefficients $a(x) \equiv a > 0$ and
$b(x) \equiv b \in [0, p-1]$. Then we have the following: 
\begin{itemize}
\item[(a)] $J_{\mu\mu}$ attains the level $c_{\mu\mu}$
  in a fully nontrivial dual ground state if and only if
\begin{align*}
	2 < p < 4 \text{ and } b > 0
	\qquad \text{or} \qquad
	p \geq 4 \text{ and } b \geq 2^{\frac{p-2}{2}} - 1.
\end{align*}
\item[(b)] $J_{\mu\mu}$ attains the level $c_{\mu\mu}$
 in a semitrivial dual ground state if and only if
\begin{align*}
	2 < p < 4 \text{ and } b = 0
	\qquad \text{or} \qquad
	p \geq 4 \text{ and } 0 \leq b \leq 2^{\frac{p-2}{2}} - 1.
\end{align*}
\end{itemize}
\end{cor}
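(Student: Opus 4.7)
The plan is to combine Theorem~\ref{thm_existence} with the classification results Theorems~\ref{thm_c_mumu_plus-1} and~\ref{thm_c_mumu_plus-2} to cover most parameter ranges, and to treat the two boundary values $b=0$ (for $2<p<4$) and $b=2^{\frac{p-2}{2}}-1$ (for $p\geq 4$) with separate arguments.

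With constant coefficients and $\mu=\nu$, the condition $b_->0$ of Theorem~\ref{thm_c_mumu_plus-1}(a) reads $b>0$, the condition of Theorem~\ref{thm_c_mumu_plus-1}(b) reduces to $b>2^{\frac{p-2}{2}}-1$ (since $a_+/a_-=1$), and its $\delta$-restriction on $\mu,\nu$ is automatic; the assumption of Theorem~\ref{thm_c_mumu_plus-2} becomes $0\leq b<2^{\frac{p-2}{2}}-1$. Thus for $b$ in the open ranges away from the two boundary values, Theorem~\ref{thm_existence} provides a dual ground state whose type is prescribed by one of the classification theorems, which simultaneously yields the ``if'' direction in each open range and, by contrapositive, the ``only if'' direction for the opposite type (a fully nontrivial and a semitrivial ground state cannot coexist at $c_{\mu\mu}$ when one of the theorems applies).

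For the decoupled case $b=0$ with $2<p<4$, the density $h$ splits as $h(x,\bar s,\bar t)=h_0(\bar s)+h_0(\bar t)$, so $J_{\mu\mu}(\bar u,\bar v)=I_\mu(\bar u)+I_\mu(\bar v)$. Testing with a path of the form $t\mapsto(\gamma(t),0)$ gives $c_{\mu\mu}\leq c_\mu$, where $c_\mu$ denotes the scalar mountain pass level, while every fully nontrivial critical point of $J_{\mu\mu}$ consists of two nontrivial critical points of $I_\mu$ and hence has energy at least $2c_\mu>c_{\mu\mu}$. The dual ground state produced by Theorem~\ref{thm_existence} must therefore be semitrivial.

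The main obstacle is the threshold case $p\geq 4$, $b=2^{\frac{p-2}{2}}-1$. Here I would exploit the symmetric ansatz $u=v=w$, compatible with the system because $\mu=\nu$ and $a,b$ are constant: this reduces~\eqref{eq_system} to the scalar Helmholtz equation $-\Delta w-\mu w=a(1+b)|w|^{p-2}w$, and the associated scalar dual ground state $\bar w$ then yields a fully nontrivial critical point $(\bar w,\bar w)$ of $J_{\mu\mu}$. Writing $c_\mu^{(Q)}$ for the scalar mountain pass level with coefficient $Q>0$, a short computation using the symmetry of $h$ gives $J_{\mu\mu}(\bar w,\bar w)=2c_\mu^{(a(1+b))}$, while the scaling identity $c_\mu^{(Q)}=Q^{-2/(p-2)}c_\mu^{(1)}$, obtained by testing the scalar dual functional against rescaled ground states, yields $2c_\mu^{(a(1+b))}=c_\mu^{(a)}$ precisely at $b=2^{\frac{p-2}{2}}-1$. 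Hence both $(\bar w,\bar w)$ and the semitrivial critical point $(\bar u_0,0)$ have common energy $c_\mu^{(a)}$, and both are dual ground states provided $c_{\mu\mu}=c_\mu^{(a)}$. The upper bound $c_{\mu\mu}\leq c_\mu^{(a)}$ comes from the semitrivial path; the matching lower bound requires carrying out the proof of Theorem~\ref{thm_c_mumu_plus-2} at this boundary value with non-strict inequalities. This boundary version of the energy estimate is the technically delicate step.
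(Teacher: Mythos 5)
Your overall structure matches the paper's: invoke Theorems~\ref{thm_c_mumu_plus-1} and~\ref{thm_c_mumu_plus-2} for the open parameter ranges and treat the two boundary values separately. Your handling of the boundary cases, however, departs from the paper in ways worth noting. For $b=0$, $2<p<4$, you use the exact decoupling $h(x,\bar s,\bar t)=\frac{a^{1-p'}}{p'}(|\bar s|^{p'}+|\bar t|^{p'})$ to write $J_{\mu\mu}=I_\mu\oplus I_\mu$, and then argue that a fully nontrivial critical point splits into two nontrivial critical points of $I_\mu$, each with energy at least $c_\mu$ (which holds because such a critical point $\bar u$ satisfies $I_\mu'(\bar u)[\bar u]=0$, so $\tau=1$ maximizes $\tau\mapsto I_\mu(\tau\bar u)$ and $I_\mu(\bar u)=E_\mu(\bar u)\geq c_\mu$). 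The paper instead reruns the energy estimate from the proof of Theorem~\ref{thm_c_mumu_plus-2} with $b_+=0$, using $p'<2$ to force $\eta_0=0$. Both are valid; your argument is arguably cleaner since it exploits the explicit splitting, though the paper's is more uniform across cases. For the threshold $p\geq4$, $b=2^{(p-2)/2}-1$, you build the fully nontrivial minimizer from the scalar dual ground state with coefficient $a(1+b)$ via the symmetric ansatz and a rescaling identity, whereas the paper simply evaluates $F_{\mu\mu}(\bar w,\bar w)$ at the scalar dual ground state $\bar w$ of $I_\mu$ (coefficient $a$) using Lemma~\ref{lem_prop_hb}(d) and invokes the inf-sup characterization of Lemma~\ref{lem_mplevel}. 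Again both work.

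The one substantive gap in your writeup is that you leave the identity $c_{\mu\mu}=c_\mu$ at the threshold as an open and ``technically delicate'' step. It is not delicate: in the proof of Theorem~\ref{thm_c_mumu_plus-2}, every inequality in the chain (including~\eqref{eq_auxiliary_h-below} and~\eqref{eq_auxiliary_mini}) is already non-strict, and the only place where $b_+<2^{(p-2)/2}-1$ is used strictly is in the \emph{last} sentence, to conclude $\eta_0=0$ from equality in the chain. For the lower bound $c_{\mu\mu}\geq c_\mu$ one never needs that final step, so the argument applies verbatim with $b_+=2^{(p-2)/2}-1$. The paper states this succinctly (``one can show as in the proof of Theorem~\ref{thm_c_mumu_plus-2}\ldots''). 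You should replace the caveat by this observation to make the proof complete.
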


The proofs of these results will be given in section 5. They essentially consist of a comparison of the energy levels $c_{\mu\nu}$ and $\min\{c_\mu, c_\nu\}$, cf. Lemma~\ref{lem_scalarmp} in section~4. 

\begin{rmk}
	If \eqref{eq_assumptions} holds and $p > 8$, 
	we have $2^{\frac{p-2}{2}} - 1 > p-1$ and thus by Theorem~\ref{thm_c_mumu_plus-2},	
	only semitrivial dual ground states occur. 
\end{rmk}

Thanks to a hint by Ev\'{e}quoz (yet unpublished), one can weaken assumptions~\eqref{eq_assumptions} imposing radial symmetry.

\begin{rmk}\label{rmk_restriction}
	If we consider spaces of radial functions and constant coefficients $a, b$, 
	all above Theorems hold under the
	weaker assumption $\frac{2N}{N - 1} < p < 2^\ast$ instead of
	$\frac{2(N+1)}{N-1} < p < 2^\ast$. 
\end{rmk}

Indeed, for the construction of a continuous resolvent
$(- \Delta - \lambda)^{-1}: \: L^{p^\prime}(\R^N, \C) \to L^p(\R^N, \C)$,
\myref refer to a result by Guti\'{e}rrez, Theorem~6 in~\cite{Gutierrez}. A crucial step in its proof, to be found on p. 19 of~\cite{Gutierrez}, is the estimate
\begin{align}\label{eq_Gutierrez}
	\int_{\mathbb{S}^{N-1}} |\mathcal{F}g (r \omega)|^2 \:\mathrm{d}\sigma(\omega)
	\leq
	C \: r^{-\frac{2N}{p'}} \norm{g}^2_{L^p(\R^N)}
	\qquad
	\text{for }
	1 \leq p \leq \frac{2(N+1)}{N+3}
\end{align}
where $g: \R \to \C$ is a Schwartz function and $\mathrm{d}\sigma$ denotes the surface measure on the sphere $\mathbb{S}^{N-1} \subseteq \R^N$. It is a consequence of the Stein-Tomas Theorem, see p. 375 and p. 414 in~\cite{Stein} for $N \geq 3$ and $N = 2$, respectively. In the radial case, however, \myre  is able to show that
$
	\norm{\mathcal{F} h}_{L^\infty_{\mathrm{d}\sigma}(\mathbb{S}^{N-1}, \C)}
	\leq C_p \norm{h}_{L^p(\R^N, \C)} 
$
for all radial Schwartz functions $h: \R \to \C$ and $1 \leq p < \frac{2N}{N+1}$.
Replacing estimate~\eqref{eq_Gutierrez} by
\begin{align*}
	\int_{\mathbb{S}^{N-1}} |\mathcal{F}g (r \omega)|^2 \:\mathrm{d}\sigma(\omega)
	\leq
	\tilde{C} \: r^{-\frac{2N}{p'}} \norm{g}^2_{L^p(\R^N)}
	\qquad
	\text{for }
	1 \leq p < \frac{2N}{N+1}
\end{align*}
for radial Schwartz functions $g: \R \to \C$,
which can be deduced e.g. from Theorem~3.5 in~\cite{Cordoba},
he thus concludes that, in the radial case, the Helmholtz resolvent
$(- \Delta - \lambda)^{-1}: \:  L_{\text{rad}}^{p^\prime}(\R^N, \C) \to L_{\text{rad}}^p(\R^N, \C)$ is continuous for $\frac{2N}{N-1} < p < 2^\ast$.

\absatz

Finally, let us briefly compare our results concerning the occurrence of fully nontrivial ground state solutions to those available in the case of a Schrödinger system, i.e. $\mu, \nu < 0$ in~\eqref{eq_system}. We assume $2 < p < 2^\ast$ and constant coupling $b(x) \equiv \beta \neq 0$; with $\omega := \sqrt{\frac{\nu}{\mu}}$, we discuss
\begin{align*}
	&\begin{cases}
		- \Delta u + \,\,\,\,\: u 
		= \left( |u|^\frac{p}{2} + \beta \, |v|^\frac{p}{2} \right) |u|^{\frac{p}{2} - 2} u		
		& \text{on } \R^N,
		\\
		- \Delta v + \omega^2 v 
		= \left(|v|^\frac{p}{2} + \beta \, |u|^\frac{p}{2} \right) |v|^{\frac{p}{2} - 2} v  		
		& \text{on } \R^N,
		\\
		u, v \in H^1(\R^N).
	\end{cases}
\end{align*}
Sharp characterizations of the occurrence of fully nontrivial ground state solutions
have been provided by the first author in~\cite{Mandel_Schroed} for the cooperative case $\beta > 0$, following pioneering work by Ambrosetti and Colorado~\cite{Ambrosetti}, Maia, Montefusco and Pellacci~\cite{MMP} and others. 
In contrast to the Helmholtz case, the parameter $p$ can be chosen from the full superlinear and subcritical range $2 < p < 2^\ast$ whereas in the Helmholtz case, we use  mapping properties of the resolvent available only for $\frac{2(N+1)}{N-1} < p < 2^\ast$.
Moreover, in order to obtain a suitable dual formulation, our discussion for the Helmholtz system only covers the range $0 \leq \beta \leq p-1$; in particular, we only study cooperative systems. In the Schrödinger case, results for the repulsive case $\beta < 0$ are available as well, see for instance~\cite{Mandel_Schroed_2} and the references therein.
Notice that in the special case $\omega = 1$ and $\mu = \nu$ in~\eqref{eq_system_dual}, the ranges for $p$ and $\beta$ in Theorem~1 and Remark~1(a) of~\cite{Mandel_Schroed} for the Schrödinger case agree with those from Corollary~\ref{cor_plus} above for the Helmholtz case.
Finally, in the situation of Theorem~\ref{thm_c_mumu_plus-1} (b) of the Helmholtz case, the question remains open whether there are threshold values for the existence and non-existence of fully nontrivial ground state solutions such as in Theorem~1 in~\cite{Mandel_Schroed}.

\section{The dual formulation, convexity and the Legendre transform}

In this section, we intend to explain and justify the transition from the nonlinear Helmholtz system~\eqref{eq_system} to \eqref{eq_system_dual}. Let us first note that the system~\eqref{eq_system} can be written in the form
\begin{align*}
	&\begin{cases}
		- \Delta u - \mu u = 
		\partial_s f(x, u, v) 						& \text{on } \R^N,
		\\
		- \Delta v - \nu v \, = 
		\partial_t f(x, u, v)					  		& \text{on } \R^N,
		\\
		u, v \in L^p(\R^N)
	\end{cases}
\end{align*}
where
\begin{align}\label{eq_f}
	f: \R^N \times \R \times \R \to \R, \quad
	f (x, s, t) 
	= \frac{a(x)}{p} \left(|s|^p + 2 b(x) |s|^\frac{p}{2} |t|^\frac{p}{2} + |t|^p\right).
\end{align}
The transformation~\eqref{eq_transform-ubaru} then reads 
$	(\bar{u}(x), \bar{v}(x)) = \nabla_{s, t} f(x, u(x), v(x))$ for $x \in \R^N$.

\absatz

We first provide the definition and the most important properties of the convolution terms in \eqref{eq_system_dual}, referring to section 2 of~\cite{EvequozWeth} and the beginning of section 4 in~\cite{Evequoz_plane} for more details. 
We denote by $\mathcal{F}$ the Fourier transform on the Schwartz space $\mathcal{S}(\R^N, \C)$. 
For $\lambda > 0$ and $\eps > 0$, the operator 
	$- \Delta - (\lambda + \i \eps): \mathcal{S}(\R^N, \C) \to \mathcal{S}(\R^N, \C)$ 
possesses a resolvent given by 
\begin{align*}
	\mathcal{R}_\eps^\lambda: \mathcal{S}(\R^N, \C) \to \mathcal{S}(\R^N, \C), \: \:
	g \mapsto \mathcal{F}^{-1} \left( (|\,\cdot\,|^2 - (\lambda + \i\eps))^{-1} \cdot \mathcal{F}(g) \right).
\end{align*}
By the Limiting Absorption Principle of Guti\'{e}rrez, cf. the introduction before Theorem~6 in~\cite{Gutierrez}, it is possible to pass to the limit $\eps \searrow 0$, which yields an operator 
\begin{align*}
	\mathcal{R}^\lambda: \mathcal{S}(\R^N, \C) \to \mathcal{S}'(\R^N, \C),
	\quad
	\mathcal{R}^\lambda g [w] = \int_{\R^N} (\Phi_\lambda \ast g) \cdot w \: \mathrm{d}x
\end{align*}
where the kernel $\Phi_\lambda: \R^N \to \C$ is given by 
\begin{align}\label{eq_phi}
	\Phi_\lambda (x) = \lambda^\frac{N-2}{2} \: \Phi_1(\sqrt{\lambda} x)
	&=
	 \frac{\i}{4} 
	\left( \frac{\lambda}{4 \pi^2 |x|^2} \right)^\frac{N-2}{4} \: 
	 H^{(1)}_\frac{N-2}{2}(|\sqrt{\lambda} x|)		
	 \quad
	 (x \in \R^N, \lambda > 0),
\end{align}
cf. the text before equation (6) in~\cite{Evequoz_plane} for $N = 2$ and equation (11) in~\cite{EvequozWeth} for $N \geq 3$.
It can be shown that there exists a continuous extension
\begin{equation}\label{eq_HelmholtzResolve}
 	\mathcal{R}^\lambda: L^{p^\prime}(\R^N, \C) \to L^p(\R^N, \C),
	\quad
	\bar{w} \mapsto \Phi_\lambda \ast \bar{w}
	\qquad
	\text{for } \frac{2(N+1)}{N-1} < p < 2^\ast,
\end{equation}
cf. Theorem 2.1 both in~\cite{Evequoz_plane} and in~\cite{EvequozWeth}. Proposition A.1 in~\cite{EvequozWeth} now states that, for 
$f \in L^{p^\prime}(\R^N, \C)$, the function $u := \mathcal{R}^\lambda f \in L^p(\R^N, \C)$ is in fact a strong solution of the linear Helmholtz equation $- \Delta u - \lambda u = f$ on $\R^N$. In this sense, the operator $\mathcal{R}^\lambda$ can be interpreted as a resolvent operator for the linear Helmholtz equation.

\absatz

Being interested in real-valued solutions, we introduce $\Psi_\lambda := \Re \Phi_\lambda$ and, from now on, consider spaces of real-valued functions. Then we have continuity of  $\Psi_{\mu} \ast [ \, \cdot \, ], \Psi_{\nu} \ast [ \, \cdot \, ] : L^{p^\prime}(\R^N) \to L^p(\R^N)$ and we can find solutions of the system~\eqref{eq_system} by solving 
\begin{align}\label{eq_system_mid}
	&\begin{cases}
		 u = \Psi_\mu \ast \partial_s f (x, u, v)		& \text{on } \R^N,
		\\
		v = \Psi_\nu \ast\partial_t f (x, u, v)  		& \text{on } \R^N,
		\\
		u, v \in L^p(\R^N).
	\end{cases}
\end{align}
As mentioned earlier, we aim to reformulate the system~\eqref{eq_system_mid} by, roughly speaking, replacing the functions $u, v \in L^p(\R^N)$ by a corresponding pair 
\begin{align*}
	\bar{u}, \bar{v} \in L^{p^\prime}(\R^N)
	\quad \text{via} \quad
	\bar{u} := \partial_s f (\,\cdot\, , u, v), \: \: \bar{v} := \partial_t f (\,\cdot\, , u, v),
\end{align*}
see also~\eqref{eq_transform-ubaru},
such that the convolutions occur in the linear part of the equations. We will see in Proposition~\ref{prop_fbhb} that, under suitable assumptions on the coefficients $a$ and $b$, this transformation is invertible and preserves the variational structure in the sense that
\begin{align*}
	u = \partial_{\bar{s}} h (\,\cdot\, , \bar{u}, \bar{v}), 
	\quad
	v = \partial_{\bar{t}} h (\,\cdot\, , \bar{u}, \bar{v})
\end{align*} 
with a suitable function $h: \R^N \times \R \times \R \to \R$, which then finally provides a one-to-one correspondence between solutions of the systems \eqref{eq_system_mid} and \eqref{eq_system_dual}. 
It turns out that we have to choose $h(x, \: \cdot \:, \: \cdot \:)$ to be the Legendre transform of $f(x, \: \cdot \:, \: \cdot \:)$ for every fixed $x \in \R^N$.
We remark that, in the case of a single nonlinear Helmholtz equation~\eqref{eq_helmholtz}, $- \Delta w - \lambda w = Q(x) |w|^{p-2} w$ on $\R^N$, the associated change of variables can be done explicitly, $\bar{w}(x) := Q(x) |w(x)|^{p-2} w(x)$ and hence $w(x) = Q(x)^{1 - p^\prime} |\bar{w}(x)|^{p^\prime - 2} \bar{w}(x)$. Notice that the treatment of the term $Q$ slightly differs from that in~\cite{Evequoz_plane, EvequozWeth}.

\absatz

By Theorems 26.5 and 26.6 in~\cite{Convex}, we have the following general result of convex analysis:
\begin{thm}\label{thm_diffb}
	Let $F: \R^2 \to \R$ be differentiable, strictly convex and co-finite.
	Then $\nabla F: \R^2 \to \R^2$ is a homeomorphism,
	and the Legendre transform of $F$,
	\begin{align*}
		H: \R^2 \to \R, \quad
		H(\bar{s}, \bar{t}) := \sup_{(s, t) \in \R^2} \left( s \bar{s} + t \bar{t} - F(s, t)  \right)
	\end{align*}
	is well-defined, differentiable, strictly convex, co-finite and satisfies $\nabla H = (\nabla F)^{-1}$.
\end{thm}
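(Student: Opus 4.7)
The plan is to assemble the statement from standard convex-analytic ingredients, as the authors signal by citing Rockafellar's Theorems 26.5 and 26.6. I would proceed in three short steps.

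First, I would show that $\nabla F$ is a homeomorphism. Co-finiteness of $F$ gives $F(s,t) - s\bar{s} - t\bar{t} \to +\infty$ as $|(s,t)| \to \infty$ for each fixed $(\bar{s},\bar{t})$, so the function inside the supremum defining $H(\bar{s},\bar{t})$ is coercive and continuous; a maximizer exists, and strict convexity of $F$ makes it unique. At such a maximizer the first-order condition reads $\nabla F(s,t) = (\bar{s},\bar{t})$, which yields surjectivity of $\nabla F$. Injectivity is the standard strict-monotonicity argument: if $\nabla F(z_1) = \nabla F(z_2)$ with $z_1 \neq z_2$, adding the two strict convexity inequalities $F(z_2) > F(z_1) + \nabla F(z_1)\cdot (z_2 - z_1)$ and $F(z_1) > F(z_2) + \nabla F(z_2)\cdot (z_1 - z_2)$ produces a contradiction. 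Continuity of $(\nabla F)^{-1}$ follows because $\nabla F$ is proper: from the subgradient inequality $F(0) \geq F(z) - \nabla F(z)\cdot z$ one gets $F(z)/|z| \leq F(0)/|z| + |\nabla F(z)|$, so bounded values of $\nabla F$ force $z$ to stay bounded by co-finiteness, and a subsequence argument finishes.

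Second, I would establish the good properties of $H$. As a pointwise supremum of affine functions, $H$ is convex and lower semicontinuous, and it is finite everywhere because the supremum is attained. Writing $(s^\ast, t^\ast) := (\nabla F)^{-1}(\bar{s},\bar{t})$, the inequality
\begin{equation*}
H(\bar{s}+h, \bar{t}+k) \geq s^\ast(\bar{s}+h) + t^\ast(\bar{t}+k) - F(s^\ast,t^\ast) = H(\bar{s},\bar{t}) + s^\ast h + t^\ast k
\end{equation*}
shows that $(s^\ast,t^\ast) \in \partial H(\bar{s},\bar{t})$, and uniqueness of the maximizer means the subdifferential is a singleton, so $H$ is differentiable with $\nabla H = (\nabla F)^{-1}$. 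Strict convexity of $H$ then follows from injectivity of $\nabla H$, since a $C^1$ convex function that is affine on some nontrivial segment would have a constant gradient there.

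Finally, co-finiteness of $H$ is the dual of $F$ being finite-valued: by the Fenchel--Moreau theorem $F = H^\ast$, and Corollary~13.3.1 in~\cite{Convex} states that $H$ is co-finite if and only if $H^\ast$ is finite on all of $\R^2$, which is given. I expect the only delicate point to be exactly this last step, as it relies on the biconjugate identification together with the co-finite/finite duality; the remainder is a routine application of the first-order condition and strict monotonicity. For the paper itself the cleanest option is indeed just to quote Rockafellar, as the authors do.
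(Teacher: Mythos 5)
Your proposal is mathematically sound, but it takes a genuinely different route from the paper: the paper gives no argument at all for this statement, introducing it with the single sentence ``By Theorems 26.5 and 26.6 in [Rockafellar], we have the following general result of convex analysis,'' so the entire theorem is a citation. You instead reconstruct the proof from first principles: coercivity (from co-finiteness) plus strict concavity give existence and uniqueness of the maximizer defining $H(\bar{s},\bar{t})$, whence $\nabla F$ is bijective; the subgradient inequality makes $\nabla F$ proper and thus yields continuity of its inverse; identifying the maximizer with the unique element of $\partial H(\bar{s},\bar{t})$ gives differentiability of $H$ with $\nabla H = (\nabla F)^{-1}$; strict convexity of $H$ follows from injectivity of $\nabla H$; and co-finiteness of $H$ is read off from finiteness of $H^\ast = F$ via Rockafellar's Corollary~13.3.1. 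Your version is more instructive and almost self-contained, at the cost of length; the authors' choice to quote Rockafellar outright is the sensible one for a paper whose novelty lies elsewhere, as you yourself note. Two points you gloss over deserve a sentence each if you wanted a truly self-contained write-up: (i) passing from the ray-wise condition $\lim_{\lambda\to\infty} F(\lambda z)/\lambda = \infty$ to the uniform coercivity $F(z) - \langle z, a\rangle \to \infty$ as $|z|\to\infty$ requires a short convexity/compactness argument (one cannot just apply co-finiteness along the varying directions $z_n/|z_n|$); and (ii) ``uniqueness of the maximizer means $\partial H$ is a singleton'' implicitly uses the Fenchel--Young equality characterization that every $w \in \partial H(\bar z)$ is itself a maximizer in $\sup_z(\langle z,\bar z\rangle - F(z))$. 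Both are standard, but worth flagging since you aim to avoid the blanket Rockafellar citation.
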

Let us remark that, for a convex function $F: \R^2 \to \R$, co-finiteness is characterized by
\begin{align*}
	\lim_{\lambda \to \infty} \frac{F(\lambda s, \lambda t)}{\lambda} = \infty
	\quad \text{for all } (s, t) \neq (0, 0),
\end{align*} 
cf. the equation before Theorem 26.6 in~\cite{Convex}.
We check that, under the assumptions~\eqref{eq_assumptions} and for fixed $x \in \R^N$, Theorem~\ref{thm_diffb} applies to the function
\begin{align*}
	f(x, \: \cdot \:, \: \cdot \:): \R^2 \to \R, \quad
	f(x, s, t) = \frac{a(x)}{p} \left(|s|^p + 2 b(x) |s|^\frac{p}{2} |t|^\frac{p}{2} + |t|^p\right)
\end{align*}
so that a dual variational formulation for~\eqref{eq_system} is available and given by~\eqref{eq_system_dual}.
\begin{prop}\label{prop_fbhb}
	Let $p > 2$ and $x \in \R^N$ with $a(x) > 0$ and $0 \leq b(x) \leq p - 1$.
	Then the function $f(x, \: \cdot \:, \: \cdot \:)$ 
	is differentiable, strictly convex and co-finite. 
	Hence, its  Legendre transform $h(x, \: \cdot \:, \: \cdot \:)$ is 
	well-defined, differentiable, strictly convex, co-finite and satisfies 
	$\nabla_{\bar{s}, \bar{t}} h(x, \: \cdot \:, \: \cdot \:) = (\nabla_{s, t} f(x, \: \cdot \:, \: \cdot \:))^{-1}$. 	 
\end{prop}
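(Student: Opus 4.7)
The plan is to apply Theorem~\ref{thm_diffb} to $F := f(x, \cdot, \cdot)$ for each fixed $x$ satisfying $a(x) > 0$ and $0 \leq b(x) \leq p - 1$; the properties of $h$ and the identity $\nabla h = (\nabla F)^{-1}$ then follow at once. It therefore suffices to check that $F$ is differentiable, strictly convex, and co-finite.

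Differentiability is immediate from $p > 2$: both $r \mapsto |r|^p$ and $r \mapsto |r|^{p/2}$ are $C^1$ on $\R$ with derivatives vanishing at $0$, so the chain rule produces a continuous $\nabla_{s,t} F$ on $\R^2$. Co-finiteness follows from positive $p$-homogeneity: for any $(s, t) \neq (0, 0)$,
\[
\lambda^{-1} F(\lambda s, \lambda t) = \lambda^{p-1} F(s, t) \to \infty \quad (\lambda \to \infty),
\]
because $p > 1$ and $F(s, t) > 0$ (using $a(x) > 0$, $b(x) \geq 0$, and the positivity of at least one of $|s|^p, |t|^p$).

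The core step is strict convexity. In the open first quadrant $\{s, t > 0\}$, $F$ is $C^2$; expanding the Hessian $H$ and applying the AM--GM inequality $s^p + t^p \geq 2(st)^{p/2}$, I expect to reach a lower bound of the form
\[
\det H(s, t) \geq (p-1)(p-1-b(x))(b(x)+1)\,a(x)^2 (st)^{p-2} \geq 0,
\]
which together with the manifestly positive trace makes $H$ positive semidefinite there. Even symmetry of $F$ in $s$ and in $t$ transfers this to every open quadrant; a mollification (or distributional Hessian) argument combined with $C^1$-regularity then upgrades this to convexity on all of $\R^2$.

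To obtain strict convexity, I will pin down the degenerate locus $\Sigma = \{\det H = 0\}$ by direct factorization. It is contained in the coordinate axes together with, precisely when $b(x) = p-1$, the two diagonals $\{|s| = |t|\}$---a finite union of lines through the origin. Given any line segment $L \subset \R^2$, one of two situations occurs: either $L$ meets $\Sigma$ in only finitely many points, in which case $(F|_L)''$ exists and is strictly positive almost everywhere on $L$ (so the $C^1$ convex function $F|_L$ is strictly convex, its derivative being strictly monotone); or $L$ lies inside one of the exceptional lines, in which case direct substitution gives $F|_L = \tfrac{a(x)}{p}|r|^p$ (on an axis) or $F|_L = \tfrac{2a(x)(1+b(x))}{p}|r|^p$ (on a diagonal), strictly convex since $p > 1$. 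The principal obstacle is the borderline $b(x) = p-1$, where the Hessian genuinely has a nontrivial kernel along the diagonals; this line-by-line case analysis is precisely what is needed to rescue strict convexity in that regime.
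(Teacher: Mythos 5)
Your proposal is correct and follows essentially the same route as the paper: reduce to Theorem~\ref{thm_diffb}, dispose of differentiability and co-finiteness quickly, and then establish strict convexity by computing the trace and determinant of the Hessian off the degenerate lines (arriving at the same factorisation $(p-1)(p-1-b)(b+1)$ via AM--GM), followed by a line-by-line case split. The only stylistic difference is that you first upgrade the positive-semidefinite Hessian to (non-strict) convexity of $F$ on $\R^2$ via a mollification or distributional-Hessian argument, and then use that convexity to deduce strict monotonicity of $(F|_L)'$ from $(F|_L)''>0$ a.e.; the paper avoids this intermediate step by writing the Taylor remainder $\mathcal{I}$ directly as a double integral of $\langle D^2 f\cdot h,h\rangle$ along each segment (using integrability of $|s|^{p/2-2}$ for $p>2$, so that the formula survives crossing the degenerate lines at finitely many points), which is slightly more economical. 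Your characterisation of the degenerate locus (axes always, plus the diagonals $|s|=|t|$ exactly when $b(x)=p-1$) is sharper than what the paper states, and correct; both approaches handle segments lying inside such a line by the explicit one-dimensional restrictions $\frac{a(x)}{p}|r|^p$ and $\frac{2a(x)(1+b(x))}{p}|r|^p$.
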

The proof is given in the appendix. Let us emphasize that it is only this existence result which requires the assumption $0 \leq b(x) \leq p-1$ in~\eqref{eq_assumptions}. In some cases, the Legendre transform $h$ can be calculated explicitly. For instance, in the case $b(x) \equiv 1$, we have for all $x \in \R^N$ and $s, t \in \R$
\begin{small}
\begin{align*}
		&f(x, s, t) = \frac{a(x)}{p} \left(|s|^\frac{p}{2} + |t|^\frac{p}{2}\right)^2,
		&& \nabla_{s, t} f (x, s, t) =a(x) \, \left( |s|^\frac{p}{2} + |t|^\frac{p}{2} \right) 
		\svec{|s|^{\frac{p}{2}-2} s}{|t|^{\frac{p}{2}-2} t},
		\\
		&h (x, \bar{s}, \bar{t}) = \frac{a(x)^{1 - p^\prime}}{p^\prime} 
		\left(|\bar{s}|^\frac{p}{p-2} + |\bar{t}|^\frac{p}{p-2}  \right)^\frac{p-2}{p-1},
		&& \nabla_{\bar{s}, \bar{t}} h(x, \bar{s}, \bar{t}) 
		= \left( a(x) \left(|\bar{s}|^\frac{p}{p-2} + |\bar{t}|^\frac{p}{p-2} \right)  \right)^{1 - p^\prime} 
		\svec{|\bar{s}|^\frac{4-p}{p-2} \bar{s}}{|\bar{t}|^\frac{4-p}{p-2} \bar{t}}.
\end{align*}
\end{small}

In the following sections we will need some properties of $h(x, \: \cdot \:, \: \cdot \:)$ which are listed next and will be proved in the appendix.
\begin{lem}\label{lem_prop_hb}
Let $p > 2$ and $x \in \R^N$ with $a(x) > 0$, $0 \leq b(x) \leq p - 1$. Then, for $\bar{s}, \bar{t} \in \R$,  
\begin{enumerate}[label={(\alph*)}]
\item	$	h(x, \bar{s}, \bar{t}) = \dfrac{a(x)^{1 - p^\prime}}{p^\prime} \left[\sup\limits_{\sigma > 0} 
		\dfrac{|\bar{s}| + \sigma |\bar{t}|}{ (1 + 2b(x) \, \sigma^\frac{p}{2} + \sigma^p)^\frac{1}{p}}
		\right]^{p^\prime}$,
\item $h(x, \bar{s}, \bar{t}) = h(x, \bar{t}, \bar{s}) = h(x, -\bar{s}, \bar{t})$,
\item $h(x, \bar{s}, \bar{t}) = 
			\dfrac{1}{p^\prime} \nabla_{\bar{s}, \bar{t}} h(x, \bar{s}, \bar{t}) \cdot \svec{\bar{s}}{\bar{t}}$,
\item $h(x, \bar{s}, 0) = \dfrac{a(x)^{1 - p^\prime}}{p^\prime} |\bar{s}|^{p^\prime}$ as well as 
			$h(x, \bar{s}, \bar{s}) = \dfrac{2 a(x)^{1 - p^\prime}}{p^\prime} 
			 (1+b(x))^{1 - p^\prime} |\bar{s}|^{p^\prime} $,
\item $\dfrac{1}{p^\prime} (a(x) (1 + b(x)))^{1 - p^\prime} 
	\big( |\bar{s}|^{p^\prime} + |\bar{t}|^{p^\prime} \big)
	\leq h(x, \bar{s}, \bar{t}) \leq \dfrac{1}{p^\prime} a(x)^{1-p^\prime}
	\big( |\bar{s}|^{p^\prime} + |\bar{t}|^{p^\prime} \big)$.
\end{enumerate}
\end{lem}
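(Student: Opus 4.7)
The plan is to derive formula (a) first as the central identity, from which parts (b)--(d) follow with little extra work, and to obtain (e) from a pair of scalar bounds on $f(x,\cdot,\cdot)$ combined with the order-reversing property of the Legendre transform.

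For (a), I will exploit that $f(x,\cdot,\cdot)$ is positively $p$-homogeneous and invariant under sign changes in each argument. By these symmetries, any $(s,t)\in\R^{2}$ can be written as $(s,t)=\bigl(\mathrm{sgn}(\bar{s})\,r,\,\mathrm{sgn}(\bar{t})\,r\sigma\bigr)$ with $r\geq 0$ and $\sigma\geq 0$; opposite sign choices only turn the first-order term non-positive and are therefore suboptimal. With this parametrization the objective $s\bar{s}+t\bar{t}-f(x,s,t)$ in the definition of the Legendre transform becomes $r(|\bar{s}|+\sigma|\bar{t}|)-r^{p}f(x,1,\sigma)$. Elementary optimization in $r\geq 0$ produces the maximizer $r=\bigl((|\bar{s}|+\sigma|\bar{t}|)/(pf(x,1,\sigma))\bigr)^{1/(p-1)}$ with value $\tfrac{1}{p^{\prime}}\bigl[(|\bar{s}|+\sigma|\bar{t}|)/(pf(x,1,\sigma))^{1/p}\bigr]^{p^{\prime}}$. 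Substituting $pf(x,1,\sigma)=a(x)(1+2b(x)\sigma^{p/2}+\sigma^{p})$, pulling out the factor $a(x)^{-p^{\prime}/p}=a(x)^{1-p^{\prime}}$, and taking the remaining supremum over $\sigma\geq 0$ yields exactly (a).

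Parts (b)--(d) are then short consequences. (b) is immediate from the symmetries $f(x,s,t)=f(x,t,s)=f(x,-s,t)$, either directly from the Legendre definition or by inspection of (a). For (c) I will invoke Euler's identity $\nabla_{s,t}f(x,s,t)\cdot(s,t)=p\,f(x,s,t)$ for the $p$-homogeneous $f$, together with the Fenchel identity $h(x,\bar{s},\bar{t})=s\bar{s}+t\bar{t}-f(x,s,t)$ at the unique maximizer $(s,t)=\nabla_{\bar{s},\bar{t}}h(x,\bar{s},\bar{t})$, which by Proposition~\ref{prop_fbhb} satisfies $\nabla_{s,t}f(x,s,t)=(\bar{s},\bar{t})$; combining both gives $h=(1-\tfrac{1}{p})(s\bar{s}+t\bar{t})=\tfrac{1}{p^{\prime}}\nabla_{\bar{s},\bar{t}}h(x,\bar{s},\bar{t})\cdot(\bar{s},\bar{t})$. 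The first identity in (d) is (a) at $\bar{t}=0$, where the supremum is attained at $\sigma=0$. For the second, I will argue via the primal picture: by the $(s,t)\mapsto(t,s)$ symmetry of $f$ and the uniqueness provided by Proposition~\ref{prop_fbhb}, the maximizer for $(\bar{s},\bar{s})$ with $\bar{s}>0$ lies on the diagonal $s=t=:s_{0}$; the scalar equation $a(x)(1+b(x))s_{0}^{p-1}=\bar{s}$ determines $s_{0}$ explicitly, and substituting back yields the closed form, which is then extended to arbitrary $\bar{s}\in\R$ by the sign symmetry from (b).

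For (e), I will use that the Legendre transform reverses pointwise inequalities, $f_{1}\leq f_{2}\Rightarrow f_{1}^{\ast}\geq f_{2}^{\ast}$. Combining the AM--GM bound $2|s|^{p/2}|t|^{p/2}\leq|s|^{p}+|t|^{p}$ with $b(x)\geq 0$ yields the scalar bracket
\[
\frac{a(x)}{p}\bigl(|s|^{p}+|t|^{p}\bigr)\;\leq\;f(x,s,t)\;\leq\;\frac{a(x)(1+b(x))}{p}\bigl(|s|^{p}+|t|^{p}\bigr).
\]
Since the Legendre transform of the separable function $\tfrac{c}{p}(|s|^{p}+|t|^{p})$ is the separable function $\tfrac{c^{1-p^{\prime}}}{p^{\prime}}(|\bar{s}|^{p^{\prime}}+|\bar{t}|^{p^{\prime}})$, applying this with $c=a(x)$ gives the upper bound on $h$ and with $c=a(x)(1+b(x))$ the lower bound. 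The only genuinely technical step in the whole proof is the one-dimensional reduction carried out in (a); every other item is a direct consequence of (a), of Euler's identity, or of the order-reversing property of the Legendre transform.
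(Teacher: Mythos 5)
Your proposal is correct, and for parts (a), (b), (d), (e) it follows essentially the same line as the paper: the same one-dimensional reduction $t=s\sigma$ and explicit optimization in $s$ for (a), the same symmetries of $f$ for (b) and the same diagonal computation via $\nabla_{\bar s,\bar t}h=(\nabla_{s,t}f)^{-1}$ for (d), and the same AM--GM bracket on $f$ for (e) (you package it via the order-reversing property $f_1\le f_2\Rightarrow f_1^\ast\ge f_2^\ast$, whereas the paper verifies the two bounds directly by inserting the separable maximizers $s_x,t_x$; this is the same computation, just phrased more conceptually). Part (c) is where you take a genuinely different, slightly more economical route: you combine Euler's identity $\nabla_{s,t}f\cdot(s,t)=p\,f$ for the $p$-homogeneous $f$ with the Fenchel equality $h(x,\bar s,\bar t)+f(x,s,t)=s\bar s+t\bar t$ at the maximizer $(s,t)=\nabla_{\bar s,\bar t}h(x,\bar s,\bar t)$, which gives $h=(1-\tfrac1p)(s\bar s+t\bar t)$ without any reference to (a). The paper instead first reads off the $p'$-homogeneity $h(x,\alpha\bar s,\alpha\bar t)=\alpha^{p'}h(x,\bar s,\bar t)$ from formula (a), differentiates in $\alpha$, and sets $\alpha=1$. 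Both arguments are short and elementary; yours decouples (c) from (a), the paper's makes the logical chain (a) $\Rightarrow$ (c) explicit. (One small remark on your (a): the substitution $(s,t)=(\operatorname{sgn}(\bar s)r,\operatorname{sgn}(\bar t)r\sigma)$ needs the proviso $\bar s\neq0$, which the paper handles by taking $\bar s\neq0$ w.l.o.g.\ and invoking the symmetry of (b); you should make the same reservation.)
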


If we additionally impose that the coefficients $a, b: \R^N \to \R$ are measurable, we conclude that for $\bar{u}, \bar{v} \in \L{p^\prime}$, the mapping 
\begin{align*}
	x \mapsto h(x, \bar{u}(x), \bar{v}(x)) &= \sup_{s, t \in \R} \left( s\bar{u}(x) + t\bar{v}(x) - f(x, s, t) \right) 
	\\
	&= \sup_{s, t \in \Q} \left( s\bar{u}(x) + t\bar{v}(x) - f(x, s, t) \right)
\end{align*}
is measurable since it is a pointwise supremum of countably many measurable functions. Moreover, when combined with (e) of the previous Lemma, we have $h(\,\cdot\,, \bar{u}, \bar{v}) \in \L{1}$ and the functional $J_{\mu\nu}$ as introduced in equation~\eqref{eq_Jmunub} is well-defined and continuously Fr\'{e}chet differentiable.
In particular, for $\bar{u}, \bar{v} \in \L{p^\prime}$, property (c) yields the identity
\begin{align}\label{eq_JprimeJ}
	J_{\mu\nu}'(\bar{u}, \bar{v})[\bar{u}, \bar{v}]
	= 
	\int_{\R^N} p^\prime \, h(x, \bar{u}, \bar{v}) \: \mathrm{d}x
		- \int_{\R^N} \bar{u} \Psi_\mu \ast \bar{u} + \bar{v} \Psi_\nu \ast \bar{v} \: \mathrm{d}x.
\end{align}

\section{Proof of Theorem~\ref{thm_existence}}

In this section, we give the proof of Theorem~\ref{thm_existence}. This will be achieved using the Mountain Pass Theorem and following the ideas in~\cite{EvequozWeth}. We endow the product space $\LL{p^\prime}$ with the norm given by $\norm{(\bar{u}, \bar{v})}_{p^\prime} := \norm{\bar{u}}_{p^\prime} + \norm{\bar{v}}_{p^\prime}$ for $\bar{u}, \bar{v} \in \L{p^\prime}$ and collect the following two major auxiliary results:

\begin{lem}[Mountain Pass Geometry, see Lemma 4.2 in \cite{EvequozWeth}]\label{lem_MPgeo}
	Assuming~\eqref{eq_assumptions}, the functional $J_{\mu\nu}$ has the following 
	properties:
	\begin{itemize}
	\item[(i)]
	There exist $\delta > 0$ and $\rho \in (0, 1)$ 
	with the property that, for $(\bar{u}, \bar{v}) \in \LL{p^\prime}$,
	$J_{\mu\nu}(\bar{u}, \bar{v})  > 0$
	if $0 < \norm{(\bar{u}, \bar{v})}_{p^\prime} \leq \rho$
	and $J_{\mu\nu}(\bar{u}, \bar{v}) \geq \delta$ 
	if $\norm{(\bar{u}, \bar{v})}_{p^\prime} = \rho$.
	\item[(ii)]
	There exists $(\bar{u}_1, \bar{v}_1) \in \LL{p^\prime}$ with $\norm{(\bar{u}_1, \bar{v}_1)}_{p^\prime} > 1$ and
	$J_{\mu\nu}(\bar{u}_1, \bar{v}_1) < 0$.
	\item[(iii)]
	Every Palais-Smale sequence for $J_{\mu\nu}$ is bounded in $\LL{p^\prime}$.
	\end{itemize}
\end{lem}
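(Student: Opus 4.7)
The plan is to verify (i)--(iii) in turn, using three ingredients: the $p'$-homogeneity of $h(x,\cdot,\cdot)$ (equivalent to the Euler identity in Lemma~\ref{lem_prop_hb}(c)), the two-sided bounds of Lemma~\ref{lem_prop_hb}(e), and the continuity of the convolution operators $\Psi_\mu \ast [\,\cdot\,]$ and $\Psi_\nu \ast [\,\cdot\,] \colon L^{p'}(\R^N) \to L^p(\R^N)$ from~\eqref{eq_HelmholtzResolve}. The structural fact underlying everything is $1 < p' < 2$, so the $\int h$ contribution to $J_{\mu\nu}$ scales like $\|\cdot\|_{p'}^{p'}$ while the quadratic convolution term scales like $\|\cdot\|_{p'}^2$; the two compete in opposite regimes. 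For (i), the lower bound in Lemma~\ref{lem_prop_hb}(e) gives $\int_{\R^N} h(x,\bar u,\bar v)\,\mathrm{d}x \geq c_1(\|\bar u\|_{p'}^{p'}+\|\bar v\|_{p'}^{p'})$, while H\"older combined with~\eqref{eq_HelmholtzResolve} yields $\bigl|\int_{\R^N} \bar u\,\Psi_\mu \ast \bar u + \bar v\,\Psi_\nu \ast \bar v\,\mathrm{d}x\bigr| \leq c_2(\|\bar u\|_{p'}^2+\|\bar v\|_{p'}^2)$. Combining these with $p'<2$, one obtains $J_{\mu\nu}(\bar u,\bar v) \geq c_1'\|(\bar u,\bar v)\|_{p'}^{p'} - c_2'\|(\bar u,\bar v)\|_{p'}^2$, which is strictly positive for small nonzero norms and bounded below by some $\delta > 0$ on any sufficiently small sphere of radius $\rho \in (0,1)$.

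For (ii), I would invoke the scalar mountain pass geometry of Ev\'{e}quoz--Weth. By Lemma~\ref{lem_prop_hb}(d), $J_{\mu\nu}(\bar u,0) = I_\mu(\bar u)$ in the notation of Remark~\ref{rmk_scalar}. Picking any real $\bar u \in L^{p'}(\R^N)$ with $\int_{\R^N} \bar u\,\Psi_\mu \ast \bar u\,\mathrm{d}x > 0$ (which exists because the Helmholtz resolvent has a nontrivial positive-definite part, a standard consequence of the limiting absorption principle) and exploiting the $p'$-homogeneity of $h$, one finds
\[
I_\mu(t\bar u) = t^{p'}\int_{\R^N} h(x,\bar u,0)\,\mathrm{d}x - \tfrac{t^2}{2}\int_{\R^N} \bar u\,\Psi_\mu \ast \bar u\,\mathrm{d}x \to -\infty
\]
as $t \to \infty$ because $p' < 2$. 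Choosing $t$ large enough, $(\bar u_1,\bar v_1) := (t\bar u,0)$ satisfies both $\|(\bar u_1,\bar v_1)\|_{p'} > 1$ and $J_{\mu\nu}(\bar u_1,\bar v_1) < 0$.

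For (iii), the key is the Ambrosetti--Rabinowitz-type trick afforded by the homogeneity identity~\eqref{eq_JprimeJ}: a direct computation gives
\[
J_{\mu\nu}(\bar u,\bar v) - \tfrac{1}{2} J_{\mu\nu}'(\bar u,\bar v)[\bar u,\bar v] = \bigl(1-\tfrac{p'}{2}\bigr) \int_{\R^N} h(x,\bar u,\bar v)\,\mathrm{d}x
\]
with coefficient $1 - p'/2 > 0$. For a Palais--Smale sequence $(\bar u_n,\bar v_n)$, the left side is bounded by $|c|+o(1)+o(1)\|(\bar u_n,\bar v_n)\|_{p'}$, while the lower bound in Lemma~\ref{lem_prop_hb}(e) controls the right side from below by a constant times $\|(\bar u_n,\bar v_n)\|_{p'}^{p'}$. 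Since $p' > 1$, the resulting estimate $c_3 \|(\bar u_n,\bar v_n)\|_{p'}^{p'} \leq C + o(1)\|(\bar u_n,\bar v_n)\|_{p'}$ forces boundedness. The step that requires the most care, and is really the heart of the lemma, is (iii): it is the homogeneity of $h$ encoded in~\eqref{eq_JprimeJ} (ultimately from Lemma~\ref{lem_prop_hb}(c)) that replaces a classical Ambrosetti--Rabinowitz condition at the level of the original Helmholtz nonlinearity and makes this argument work, whereas (i) and (ii) reduce to careful constant-chasing with~\eqref{eq_HelmholtzResolve} and Lemma~\ref{lem_prop_hb}(e).
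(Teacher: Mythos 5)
Your proof is correct and follows precisely the approach the paper has in mind: the paper omits this proof, citing Lemma~4.2 in \cite{EvequozWeth} as the scalar prototype, and the ``minor modifications'' the authors mention are exactly what you supply---replacing the explicit $L^{p'}$-norm of the scalar dual functional by the two-sided bounds of Lemma~\ref{lem_prop_hb}(e), using the resolvent continuity~\eqref{eq_HelmholtzResolve} plus H\"older for the quadratic term, and exploiting the $p'$-homogeneity encoded in~\eqref{eq_JprimeJ} for the Ambrosetti--Rabinowitz-type computation in (iii). Your identification of the homogeneity identity as the structural substitute for a superlinearity condition is exactly the right insight, and the bookkeeping with $1<p'<2$ in each of (i)--(iii) is accurate.
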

\begin{lem}[Existence of Palais-Smale sequence, see Lemma 6.1 in \cite{EvequozWeth}]\label{lem_PSseq}
	Assuming~\eqref{eq_assumptions}, 
	there exists a bounded Palais-Smale sequence $(\bar{u}_n, \bar{v}_n)_{n \in \N}$ in $\LL{p^\prime}$
	for $J_{\mu\nu}$ at the level $c_{\mu\nu}$ as given in equation~\eqref{eq_mplevel}.
\end{lem}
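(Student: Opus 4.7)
The plan is to obtain the claimed sequence as a direct application of the classical Mountain Pass Theorem in its PS-free formulation (i.e., the version, proved via Ekeland's variational principle or the quantitative deformation lemma, which guarantees the existence of a Palais-Smale sequence at the minimax level even when no PS or compactness condition is assumed on the functional). Since $J_{\mu\nu} \in C^1(\LL{p'}, \R)$ by the discussion at the end of Section~2 and clearly $J_{\mu\nu}(0) = 0$, the only hypotheses to be verified are the two-sided mountain-pass geometry and the non-triviality of the path class $\Gamma_{\mu\nu}$, both of which are already at our disposal.

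Concretely, I would argue as follows. First, Lemma~\ref{lem_MPgeo}(i) provides $\rho \in (0,1)$ and $\delta > 0$ with $J_{\mu\nu}(\bar{u}, \bar{v}) \geq \delta$ whenever $\norm{(\bar{u}, \bar{v})}_{p'} = \rho$, so that the origin is separated from infinity by a positive barrier. Next, Lemma~\ref{lem_MPgeo}(ii) furnishes $(\bar{u}_1, \bar{v}_1) \in \LL{p'}$ with $\norm{(\bar{u}_1, \bar{v}_1)}_{p'} > 1 > \rho$ and $J_{\mu\nu}(\bar{u}_1, \bar{v}_1) < 0$. The straight-line path $\gamma_0(t) := t (\bar{u}_1, \bar{v}_1)$, $t \in [0,1]$, therefore belongs to $\Gamma_{\mu\nu}$, showing $\Gamma_{\mu\nu} \neq \emptyset$, and it crosses the sphere of radius $\rho$, so that $c_{\mu\nu} \geq \delta > 0$. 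The standard Mountain Pass Theorem then yields a sequence $(\bar{u}_n, \bar{v}_n)_{n \in \N} \subset \LL{p'}$ with
\begin{align*}
	J_{\mu\nu}(\bar{u}_n, \bar{v}_n) \to c_{\mu\nu}
	\quad \text{and} \quad
	J_{\mu\nu}'(\bar{u}_n, \bar{v}_n) \to 0 \text{ in } (\LL{p'})^\ast.
\end{align*}

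Finally, boundedness is automatic: Lemma~\ref{lem_MPgeo}(iii) states that every Palais-Smale sequence for $J_{\mu\nu}$ is bounded in $\LL{p'}$, and in particular this holds for the sequence just constructed. In this sense the lemma is essentially a bookkeeping step — all of the analytic work has already been packaged into Lemma~\ref{lem_MPgeo}, whose part~(iii) (based on combining \eqref{eq_JprimeJ} with the lower bound in Lemma~\ref{lem_prop_hb}(e)) carries the only genuine content; the actual obstacle, namely the lack of compactness that prevents one from directly extracting a nontrivial critical point from the PS sequence, is not addressed at this stage and will be overcome in the subsequent sections using the $\Z^N$-periodicity of the coefficients and a concentration-compactness / non-vanishing argument à la Lions.
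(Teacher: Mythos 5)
Your proposal is correct and reconstructs the expected argument behind the cited Lemma~6.1 of \myref, which the present paper invokes without reproof: the PS-free (general minimax) form of the Mountain Pass Theorem supplies a Palais–Smale sequence at the level $c_{\mu\nu}$, and Lemma~\ref{lem_MPgeo}(iii) upgrades it to a bounded one. Two points are worth stating explicitly rather than leaving implicit: first, $c_{\mu\nu}\geq\delta$ requires that \emph{every} $\gamma\in\Gamma_{\mu\nu}$ (not merely your $\gamma_0$) crosses the sphere $\norm{(\bar u,\bar v)}_{p'}=\rho$, which follows because Lemma~\ref{lem_MPgeo}(i) together with $J_{\mu\nu}(\gamma(1))<0=J_{\mu\nu}(0,0)$ forces $\norm{\gamma(1)}_{p'}>\rho$; second, since $\gamma(1)$ is not a fixed endpoint in $\Gamma_{\mu\nu}$ but only constrained by $J_{\mu\nu}(\gamma(1))<0$, one must apply the general minimax principle (e.g.\ Willem) rather than the fixed-endpoint Ambrosetti--Rabinowitz statement, observing that the relevant deformation fixes the sublevel set $\{J_{\mu\nu}\leq 0\}$ — and hence preserves $\Gamma_{\mu\nu}$ — precisely because $c_{\mu\nu}\geq\delta>0$.
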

Up to minor modifications, both results can be proved in the same way as the corresponding scalar results, Lemma~4.2 and Lemma~6.1 in~\cite{EvequozWeth} for $N \geq 3$, which also hold for $N = 2$ as explained at the beginning of the proof of Theorem~1.3 (b) of~\cite{Evequoz_plane}. We thus omit the proof.

\begin{proof}[\textbf{Proof of Theorem~\ref{thm_existence}}]
	This proof mainly follows the lines of the proof of Theorem 1.3(b) in~\cite{Evequoz_plane} for $N = 2$ and Theorem 6.2 in~\cite{EvequozWeth} for $N \geq 3$, respectively, which we will refer to as the scalar case. We will therefore focus on those parts which differ due to the fact that we discuss a system of equations.

	Let $(\bar{u}_n, \bar{v}_n)_{n \in \N}$ denote a bounded Palais-Smale sequence 
	at the level $c_{\mu\nu}$  which exists by Lemma~\ref{lem_PSseq}; 
	then w.l.o.g. $\bar{u}_n \rightharpoonup \bar{u}$ 
	and $\bar{v}_n \rightharpoonup \bar{v}$ as $n \to \infty$ weakly in $\L{p^\prime}$. 
	We perform a concentration compactness argument which relies on the periodicity of the coefficients $a, b$.
	
	\step{1}{(Nonvanishing.) 
	There exists a ball $B \subseteq \R^N$ such that, up to a subsequence
	and up to translations, $\inf_{n \in \N} \: \int_{B} h(x, \bar{u}_n, \bar{v}_n) \: \mathrm{d}x > 0$.}
	
	As in the scalar case, definition~\eqref{eq_Jmunub} and identity~\eqref{eq_JprimeJ} imply, as $n \to \infty$, 
	\begin{align*}
		\int_{\R^N}  \bar{u}_n \Psi_\mu \ast \bar{u}_n + \bar{v}_n \Psi_\nu \ast \bar{v}_n \: \mathrm{d}x
		&= \frac{2p}{p-2}
		\left[  J_{\mu\nu}(\bar{u}_n, \bar{v}_n) 	
		- \frac{1}{p^\prime} J_{\mu\nu}^\prime(\bar{u}_n, \bar{v}_n)[\bar{u}_n, \bar{v}_n] \right]
		\to \frac{2p}{p-2} \cdot c_{\mu\nu}.
	\end{align*}
	As $c_{\mu\nu} > 0$ due to \eqref{eq_mplevel} and Lemma~\ref{lem_MPgeo} (i), 
	we conclude 
	$\limsup_{n\to\infty} \int_{\R^N}  \bar{u}_n \Psi_\mu \ast \bar{u}_n \: \mathrm{d}x > 0$ or 
	$\limsup_{n\to\infty} \int_{\R^N}  \bar{v}_n \Psi_\nu \ast \bar{v}_n \: \mathrm{d}x > 0$.
	We apply the (scalar) Nonvanishing Theorems, Theorem~3.1 in~\cite{Evequoz_plane} for $N = 2$
	and Theorem~3.1 in~\cite{EvequozWeth} for $N \geq 3$ to find $R, \zeta > 0$ and 
	$(x_n)_{n \in \N} \subseteq \R^N$ such that, up to a subsequence,
	$\int_{B_R(x_n)} |\bar{u}_n|^{p^\prime}+ |\bar{v}_n|^{p^\prime} \: \mathrm{d}x \geq \zeta$
	holds for all $n \in \N$. 
	Possibly enlarging the radius $R$, we may w.l.o.g. assume $x_n \in \Z^N$ for all $n \in \N$.	
	By Lemma~\ref{lem_prop_hb} (e), and with 
	$\gamma_p := \frac{1}{p'} (a_+ (1 + b_+))^{1 - p^\prime}$, we have
	\begin{align}\label{eq_nonvanish_first}
		\int_{B_R(x_n)} h(x, \bar{u}_n, \bar{v}_n) \: \mathrm{d}x 
		\geq \gamma_p \, \zeta
		\quad \text{for all } n \in \N.
	\end{align}
	Next, we introduce the shifted 
	functions	$\bar{U}_n(x) := \bar{u}(x_n + x)$ and $\bar{V}_n(x) := \bar{v}(x_n + x)$ 
	for $n \in \N, x \in \R^N$. 
	We note that, due to the periodicity of the coefficients $a, b$ and since $x_n \in \Z^N$, 
	the Legendre transform is invariant under such translations in the sense that 
	\begin{align*}
		h(x, \bar{U}_n(x), \bar{V}_n(x))
		&= \sup_{s, t \in \R} \left( s \bar{U}_n(x) + t \bar{V}_n(x) - f(x, s, t) \right)
		\\
		&= \sup_{s, t \in \R} \left( s \bar{U}_n(x) + t \bar{V}_n(x) - f(x_n + x, s, t) \right)
		\\
		&= h(x_n + x, \bar{U}_n(x), \bar{V}_n(x)) 
		\\
		&= h(x_n + x, \bar{u}_n(x_n + x), \bar{v}_n(x_n + x))
	\end{align*}
	for almost all $x \in \R^N$ and every $n \in \N$. Thus, and due to~\eqref{eq_nonvanish_first},
	\begin{align}\label{eq_nonvanish}
		\int_{\R^N} h(x, \bar{U}_n, \bar{V}_n) \: \mathrm{d}x
		= \int_{\R^N} h(x, \bar{u}_n, \bar{v}_n) \: \mathrm{d}x
		\quad \mathrm{and} \quad
		\int_{B_R(0)} h(x, \bar{U}_n, \bar{V}_n) \: \mathrm{d}x \geq \gamma_p \, \zeta.
	\end{align}
	With that, arguing as in the scalar case, we obtain that
	$(\bar{U}_n, \bar{V}_n)_{n \in \N}$ is a bounded Palais-Smale sequence for $J_{\mu\nu}$ 
	to the level $c_{\mu\nu}$. Hence, w.l.o.g., there exist $\bar{U}, \bar{V} \in \L{p^\prime}$ 
	with $\bar{U}_n \rightharpoonup \bar{U}$ 
	and $\bar{V}_n \rightharpoonup \bar{V}$ as $n \to \infty$ weakly in $\L{p^\prime}$. 
	\vspace*{0.5cm}\\
	We intend to prove that
	\begin{align*}
		\int_{B_R(0)} h(x, \bar{U}_n, \bar{V}_n) \: \mathrm{d}x 
		\to
		\int_{B_R(0)} h(x, \bar{U}, \bar{V}) \: \mathrm{d}x
		\qquad
		\text{as } n \to \infty
	\end{align*}
	and that hence, due to the inequality in~\eqref{eq_nonvanish}, $(\bar{U}, \bar{V}) \neq (0, 0)$. 
	To this end, we need the following auxiliary result:
	
	\step{2}{We have
	$\mathds{1}_{B_R(0)} \cdot \nabla_{\bar{s}, \bar{t}} h(\:\cdot\:, \bar{U}_n, \bar{V}_n)
	\to
	\mathds{1}_{B_R(0)} \cdot \nabla_{\bar{s}, \bar{t}} h(\:\cdot\:, \bar{U}, \bar{V})$ 
	strongly in $\LL{p}$ as $n \to \infty$.}
	
	Let $\varphi, \psi \in \L{p^\prime}$ and $\tilde{\varphi} := \varphi \cdot \mathds{1}_{B_R(0)}, 
	\tilde{\psi} := \psi \cdot \mathds{1}_{B_R(0)}$. We estimate for $m, n \in \N$
	\begin{align*}
		& \left| 
		\int_{\R^N} \left( \mathds{1}_{B_R(0)} \cdot 
		\nabla_{\bar{s}, \bar{t}} h(\:\cdot\:, \bar{U}_n, \bar{V}_n)
		- \mathds{1}_{B_R(0)} \cdot 
		\nabla_{\bar{s}, \bar{t}} h(\:\cdot\:, \bar{U}_m, \bar{V}_m) \right) \cdot 
		\svec{\varphi}{\psi} \: \mathrm{d}x
		\right|
		\\
		& \quad = \left|
		J_{\mu\nu}^\prime(\bar{U}_n, \bar{V}_n)[\tilde{\varphi}, \tilde{\psi}] 
		- J_{\mu\nu}^\prime(\bar{U}_m, \bar{V}_m)[\tilde{\varphi}, \tilde{\psi}]
		+ \int_{\R^N} \tilde{\varphi} \Psi_\mu \ast (\bar{U}_n - \bar{U}_m)
		+ \tilde{\psi} \Psi_\nu \ast (\bar{V}_n - \bar{V}_m) \: \mathrm{d}x
		\right|
		\\
		& \quad \leq 
		C_{nm} \norm{(\varphi, \psi)}_{p^\prime},
	\end{align*}
	where
	\begin{align*}
		C_{nm} = &\norm{J_{\mu\nu}^\prime(\bar{U}_n, \bar{V}_n)}_{\mathcal{L}(\LL{p^\prime}, \R)}
		+ \norm{J_{\mu\nu}^\prime(\bar{U}_m, \bar{V}_m)}_{\mathcal{L}(\LL{p^\prime}, \R)} 
		\\
		&+ \norm{\mathds{1}_{B_R(0)} \cdot \Psi_\mu \ast (\bar{U}_n - \bar{U}_m)}_p
		+ \norm{\mathds{1}_{B_R(0)} \cdot \Psi_\nu \ast (\bar{V}_n - \bar{V}_m)}_p.
	\end{align*} 
	Then, we have $C_{nm} \to 0$ as $m, n \to \infty$
	since $\norm{J_{\mu\nu}^\prime(\bar{U}_n, \bar{V}_n)}_{\mathcal{L}(\LL{p^\prime}, \R)} \to 0$ by the
	Palais-Smale property and since the operator
	\begin{align*}
		\L{p'} \to \L{p}, 
		\quad
		g \mapsto \mathds{1}_{B_R(0)} \cdot \Psi_\mu \ast g
	\end{align*}		
	is	compact, cf. Lemma~4.1 in~\cite{EvequozWeth} for $N \geq 3$ and the corresponding result 
	at the beginning of section 3 of~\cite{Evequoz_plane} for $N = 2$.

	\absatz	
	
	By duality, 
	$(\mathds{1}_{B_R(0)} \cdot \nabla_{\bar{s}, \bar{t}} h(\:\cdot\:, \bar{U}_n, \bar{V}_n))_{n \in \N}$ 
	is a Cauchy	sequence in $\LL{p}$. We thus find $U, V \in L^p(B_R(0))$ with
	$\nabla_{\bar{s}, \bar{t}} h(\:\cdot\:, \bar{U}_n, \bar{V}_n) \to (U, V)$ as $n \to \infty$ 
	in $L^p(B_R(0)) \times L^p(B_R(0))$ and, up to a subsequence, 
	pointwise almost everywhere on $B_R(0)$. 
	
	As $\nabla_{s, t} f(x, \:\cdot\:, \:\cdot\: )$ is a homeomorphism on $\R^2$ for all $x \in \R^N$, 
	we have $(\bar{U}_n, \bar{V}_n) \to \nabla_{s, t} f(\:\cdot\:, U, V)$ 
	almost everywhere 	on $B_R(0)$ as $n \to \infty$. 
	Since the sequences $(\bar{U}_n)_{n\in\N}, (\bar{V}_n)_{n\in\N}$ 
	are bounded in $L^{p^\prime}(B_R(0))$, 
	Theorem 1 in~\cite{Jakszto} implies that 
	$(\bar{U}_n, \bar{V}_n) \rightharpoonup \nabla_{s, t} f(\:\cdot\:, U, V)$ 
	weakly in $L^{p^\prime}(B_R(0)) \times L^{p^\prime}(B_R(0))$ as $n \to \infty$.
	However, from the end of Step 1, we know that 
	$(\bar{U}_n, \bar{V}_n) \rightharpoonup  (\bar{U}, \bar{V})$ 
	weakly in $\LL{p^\prime}$	as $n \to \infty$. Uniqueness of the weak limit now implies 
	$(\bar{U}, \bar{V}) = \nabla_{s, t} f(\:\cdot\:, U, V)$ in $L^{p'}(B_R(0)) \times L^{p'}(B_R(0))$,
	hence
	$(U, V) = \nabla_{\bar{s}, \bar{t}} h(\:\cdot\:, \bar{U}, \bar{V})\big|_{B_R(0)}$
	and
	\begin{align*}
		\mathds{1}_{B_R(0)} \cdot \nabla_{\bar{s}, \bar{t}} h(\:\cdot\:, \bar{U}_n, \bar{V}_n) 
		\to 
		\mathds{1}_{B_R(0)} \cdot \nabla_{\bar{s}, \bar{t}} h(\:\cdot\:, \bar{U}, \bar{V})
		\quad
		\text{as } n \to \infty \text{ in } \LL{p}.
	\end{align*}
	 
	\step{3}{Conclusion.}	
	
	We find with Lemma~\ref{lem_prop_hb} (c)
	\begin{align*}
		&\left|
		\int_{B_R(0)} h(x, \bar{U}_n, \bar{V}_n) - h(x, \bar{U}, \bar{V}) \: \mathrm{d}x
		\right|
		\\
		& \quad = 
		\frac{1}{p^\prime} \left|
		\int_{B_R(0)} \nabla_{\bar{s}, \bar{t}} h(x, \bar{U}_n, \bar{V}_n) 
		\cdot \svec{\bar{U}_n}{\bar{V}_n} 
		- \nabla_{\bar{s}, \bar{t}} h(x, \bar{U}, \bar{V}) 
		\cdot \svec{\bar{U}}{\bar{V}} \: \mathrm{d}x
		\right|
			\\
		& \quad \leq \frac{1}{p^\prime} 
		\norm{ \mathds{1}_{B_R(0)} \left(\nabla_{\bar{s}, \bar{t}} h(\:\cdot\:, \bar{U}_n, \bar{V}_n) 
		- \nabla_{\bar{s}, \bar{t}} h(\:\cdot\:, \bar{U}, \bar{V}) \right)}_p 
		\norm{(\bar{U}_n, \bar{V}_n)}_{p^\prime}
		\\
		& \quad \quad 
		+\frac{1}{p^\prime} \left|
		\int_{B_R(0)} \nabla_{\bar{s}, \bar{t}} h(x, \bar{U}, \bar{V}) \cdot 
		\left[ \svec{\bar{U}_n}{\bar{V}_n}	 - \svec{\bar{U}}{\bar{V}} \right] \: \mathrm{d}x
		\right| 
	\end{align*}
	and both terms tend to zero by Step 2 and Step 1, respectively. 
	Hence, in view of the inequality in~\eqref{eq_nonvanish}, we have
	\begin{align*}
			\int_{B_R(0)} h(x, \bar{U}, \bar{V}) \: \mathrm{d}x
			= \lim_{n \to \infty} \int_{B_R(0)} h(x, \bar{U}_n, \bar{V}_n) \: \mathrm{d}x
			\geq \gamma_p \, \zeta > 0,
	\end{align*}	 
	which shows (via (e) of Lemma~\ref{lem_prop_hb}) that the weak limit satisfies 
	$(\bar{U}, \bar{V}) \neq (0, 0)$.
	
	What remains to prove is that indeed $J_{\mu\nu}(\bar{U}, \bar{V}) = c_{\mu\nu}$ and 
	$J_{\mu\nu}^\prime(\bar{U}, \bar{V}) = 0$.
	As in the scalar case, this is a consequence of the fact that 
	$(\bar{U}_n, \bar{V}_n)_{n\in\N}$ is a Palais-Smale sequence 
	which converges weakly to $(\bar{U}, \bar{V})$; for details cf. the last lines of the proof
	of Theorem~6.2 in~\cite{EvequozWeth} and of Theorem~1.3~(b) in~\cite{Evequoz_plane}, respectively.

	\absatz	
	
	Finally, letting $u := \partial_{\bar{s}} h(\,\cdot\, , \bar{u}, \bar{v})$ and 
	$v := \partial_{\bar{t}} h(\,\cdot\, , \bar{u}, \bar{v})$, it can be shown as in 
	Lemma~4.3 in~\cite{EvequozWeth} that this provides
	a strong solution of~\eqref{eq_system} and that 
	$u, v \in W^{2,q}(\R^N) \cap C^{1, \alpha}(\R^N)$ 
	for all $p \leq q < \infty, 0 < \alpha < 1$.
\end{proof}

\section{Energy levels and an inf-sup characterization of a dual ground state}

As announced earlier, the proofs of Theorems~\ref{thm_c_mumu_plus-1}~and~\ref{thm_c_mumu_plus-2} essentially consist of a comparison of energy levels. Thus, the following alternative characterization of the mountain pass level is a crucial ingredient. In preparation, we define 
$F_{\mu\nu}: 	\LL{p^\prime} \to (0, \infty]$ by $F_{\mu\nu}(0, 0) := \infty$ and, for 
$(\bar{u}, \bar{v}) \in \LL{p^\prime} \setminus \{(0, 0)\}$,
\begin{equation}\label{eq_Fmunu}
	 F_{\mu\nu}(\bar{u}, \bar{v}) := 
	\frac{p-2}{2p} \left(
	\frac{\left[\int_{\R^N} p^\prime \, h(x, \bar{u}, \bar{v}) \: \mathrm{d}x\right]^\frac{1}{p^\prime}}
	{\left[\int_{\R^N} \bar{u} \Psi_\mu \ast \bar{u} 
	+ \bar{v} \Psi_\nu \ast \bar{v} \: \mathrm{d}x\right]_+^\frac{1}{2}}
	\right)^\frac{2p}{p-2}.
\end{equation} 
Notice that the denominator may vanish also for $(\bar{u}, \bar{v}) \neq (0, 0)$ due to the oscillatory nature of the integral kernels; in this case, the definition yields $F_{\mu\nu}(\bar{u}, \bar{v}) = + \infty$.
With definition~\eqref{eq_Jmunub} and Lemma~\ref{lem_prop_hb} (a), we have
\begin{align*}
	J_{\mu\nu} (\tau\bar{u}, \tau\bar{v}) 
	= \frac{\tau^{p^\prime}}{p^\prime} \int_{\R^N} p^\prime \, h(x, \bar{u}, \bar{v}) \: \mathrm{d}x 
	- \frac{\tau^2}{2} \int_{\R^N} \bar{u} \Psi_\mu \ast \bar{u} 
	+ \bar{v} \Psi_\nu \ast \bar{v} \: \mathrm{d}x 
\end{align*}
for $\tau > 0$.
The mapping $\tau \mapsto J_{\mu\nu} (\tau\bar{u}, \tau\bar{v})$ possesses a critical point on $(0, \infty)$ if and only if 
$\int_{\R^N} \bar{u} \Psi_\mu \ast \bar{u} + \bar{v} \Psi_\nu \ast \bar{v} \: \mathrm{d}x > 0$; in this case, 
the critical point is unique and a global maximum.
A straightforward calculation shows
\begin{equation*}
	\sup_{\tau > 0} J_{\mu\nu}(\tau\bar{u}, \tau\bar{v}) = F_{\mu\nu}(\bar{u}, \bar{v}).
\end{equation*}
As a result of one-dimensional calculus, we obtain the following Lemma which provides an inf-sup characterization of the mountain pass level $c_{\mu\nu}$ defined in equation~\eqref{eq_mplevel}. We do not present its proof; variants of it can be found in the literature, e.g. the first lines of the proof of Proposition~2.1 in~\cite{Mandel_Saturation} and the text before Lemma~2.1 in~\cite{Evequoz_high}. 

\begin{lem}\label{lem_mplevel}
Under the assumptions given in~\eqref{eq_assumptions}, the mountain pass level $c_{\mu\nu}$ as defined in equation~\eqref{eq_mplevel} can be characterized as follows:
\begin{align*}
	c_{\mu\nu} 
	&= 
	\inf \brac{
	F_{\mu\nu}(\bar{u}, \bar{v}): \: \: 
	(\bar{u}, \bar{v}) \in \LL{p^\prime}	}.
\end{align*}
Moreover, $(\bar{u}_0, \bar{v}_0) \in \LL{p^\prime}$ is a minimizer of the functional $F_{\mu\nu}$ if and only if it is a nonzero multiple of a critical point of $J_{\mu\nu}$ on the mountain pass level $c_{\mu\nu}$.
\end{lem}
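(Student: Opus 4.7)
The plan is to organize the proof around the ray-scaling $\tau\mapsto J_{\mu\nu}(\tau\bar u,\tau\bar v)$ and the associated Nehari-type set
\[
	\mathcal{N}_{\mu\nu} := \bigl\{(\bar u,\bar v)\in \LL{p'}\setminus\{(0,0)\}:\; J'_{\mu\nu}(\bar u,\bar v)[\bar u,\bar v]=0\bigr\}.
\]
Using \eqref{eq_JprimeJ}, $\mathcal{N}_{\mu\nu}$ is precisely the locus where $A(\bar u,\bar v):=\int_{\R^N} p'\, h(x,\bar u,\bar v)\,\mathrm{d}x$ equals $B(\bar u,\bar v):=\int_{\R^N}\bar u\,\Psi_\mu\ast\bar u+\bar v\,\Psi_\nu\ast\bar v\,\mathrm{d}x$. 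The excerpt already shows that, for $B(\bar u,\bar v)>0$, the ray admits a unique positive maximizer $\tau^*(\bar u,\bar v)$ with maximum value $F_{\mu\nu}(\bar u,\bar v)$; because $h(x,\cdot,\cdot)$ is $p'$-homogeneous (inherited from the $p$-homogeneity of $f$) and $B$ is $2$-homogeneous, the rescaled point $\tau^*(\bar u,\bar v)\,(\bar u,\bar v)$ lies in $\mathcal{N}_{\mu\nu}$, and conversely $F_{\mu\nu}=J_{\mu\nu}$ on $\mathcal{N}_{\mu\nu}$. This identifies $\inf F_{\mu\nu}=\inf_{\mathcal{N}_{\mu\nu}} J_{\mu\nu}$ and reduces the task to proving $c_{\mu\nu}=\inf_{\mathcal{N}_{\mu\nu}} J_{\mu\nu}$.

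For $c_{\mu\nu}\leq \inf F_{\mu\nu}$, I would take any $(\bar u,\bar v)$ with $F_{\mu\nu}(\bar u,\bar v)<\infty$, pick $T>\tau^*(\bar u,\bar v)$ so large that $J_{\mu\nu}(T\bar u,T\bar v)<0$ (possible since $p'<2$ forces the ray to tend to $-\infty$), and use the straight path $\gamma(t):=tT(\bar u,\bar v)\in\Gamma_{\mu\nu}$. Then
\[
	\sup_{t\in[0,1]} J_{\mu\nu}(\gamma(t))\;\leq\; \sup_{\tau>0}J_{\mu\nu}(\tau\bar u,\tau\bar v)\;=\;F_{\mu\nu}(\bar u,\bar v),
\]
and infimising over $(\bar u,\bar v)$ yields the bound.

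For the reverse inequality I would argue that every admissible path crosses $\mathcal{N}_{\mu\nu}$ at some non-zero point. Given $\gamma\in\Gamma_{\mu\nu}$, consider the continuous function $\phi(t):=A(\gamma(t))-B(\gamma(t))$. At $t=1$, the assumption $J_{\mu\nu}(\gamma(1))<0$ together with $p'<2$ forces $A(\gamma(1))<(p'/2)B(\gamma(1))<B(\gamma(1))$, hence $B(\gamma(1))>0$ and $\phi(1)<0$. On the other hand, Lemma~\ref{lem_prop_hb}(e) gives $A(\bar u,\bar v)\geq c(\norm{\bar u}_{p'}^{p'}+\norm{\bar v}_{p'}^{p'})$, while the continuity~\eqref{eq_HelmholtzResolve} supplies $|B(\bar u,\bar v)|\leq C(\norm{\bar u}_{p'}^{2}+\norm{\bar v}_{p'}^{2})$, so that the $p'<2$ dominance yields $\phi>0$ on small non-zero arguments. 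Restricting to the connected component $(a,1]$ of $\{t\in[0,1]:\gamma(t)\neq 0\}$ containing $t=1$ (so $\gamma(t)\to 0$ as $t\to a^+$ and $\phi(t)>0$ there), the intermediate value theorem produces $t^*\in(a,1)$ with $\gamma(t^*)\neq 0$ and $\phi(t^*)=0$, i.e. $\gamma(t^*)\in\mathcal{N}_{\mu\nu}$. Consequently
\[
	\sup_{t\in[0,1]} J_{\mu\nu}(\gamma(t))\;\geq\; J_{\mu\nu}(\gamma(t^*))\;=\;F_{\mu\nu}(\gamma(t^*))\;\geq\;\inf F_{\mu\nu},
\]
and taking the infimum over $\gamma$ closes the argument.

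The final assertion is then almost automatic: any critical point $(\bar u_*,\bar v_*)$ of $J_{\mu\nu}$ at level $c_{\mu\nu}$ satisfies $J'_{\mu\nu}[\bar u_*,\bar v_*]=0$, so it lies in $\mathcal{N}_{\mu\nu}$ and attains $F_{\mu\nu}=J_{\mu\nu}=c_{\mu\nu}=\inf F_{\mu\nu}$. Conversely, a minimizer $(\bar u_0,\bar v_0)$ of $F_{\mu\nu}$ has $B(\bar u_0,\bar v_0)>0$, so its rescaling $\tau^*(\bar u_0,\bar v_0)(\bar u_0,\bar v_0)\in\mathcal{N}_{\mu\nu}$ minimizes $J_{\mu\nu}|_{\mathcal{N}_{\mu\nu}}$; a Lagrange-multiplier computation, noting that on $\mathcal{N}_{\mu\nu}$ the derivative of $G:=A-B$ evaluated on $(\bar u,\bar v)$ equals $(p'-2)A\neq 0$, forces the multiplier to vanish and produces a critical point of the unconstrained functional. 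The main obstacle I anticipate is the careful handling of the intermediate-value step at points where $\gamma$ may revisit the origin—addressed by the component argument sketched above—together with checking that $\mathcal{N}_{\mu\nu}$ is locally a $C^{1}$ manifold near a minimizer so that the Lagrange-multiplier step is legitimate.
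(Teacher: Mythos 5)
Your proof is correct. The paper itself omits a proof of this lemma, referring instead to standard variants in the literature (Proposition~2.1 of~\cite{Mandel_Saturation} and the discussion before Lemma~2.1 of~\cite{Evequoz_high}); the ray-scaling / Nehari-manifold argument you give is precisely the expected route, and you have filled in the details carefully — in particular the intermediate-value step (including the component argument that handles possible returns of $\gamma$ to the origin, and the observation that $\phi(t)=A(\gamma(t))-B(\gamma(t))>0$ near small nonzero arguments follows from $p'<2$ together with Lemma~\ref{lem_prop_hb}~(e) and the boundedness of the resolvent), and the Lagrange-multiplier step (with $G'(\bar u,\bar v)[\bar u,\bar v]=(p'-2)A\neq 0$ guaranteeing the natural-constraint property of $\mathcal{N}_{\mu\nu}$). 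One small remark: when asserting $J_{\mu\nu}=F_{\mu\nu}$ on $\mathcal{N}_{\mu\nu}$ you should note explicitly that for $(\bar u,\bar v)\in\mathcal{N}_{\mu\nu}$ one has $A=B>0$ by Lemma~\ref{lem_prop_hb}~(e), so that $F_{\mu\nu}$ is indeed finite and the algebraic identity $F_{\mu\nu}=\frac{p-2}{2p}A=J_{\mu\nu}$ applies; you use this implicitly but it is worth stating since $F_{\mu\nu}$ was defined to be $+\infty$ when the denominator vanishes.
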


These results also apply in the scalar case discussed in Remark~\ref{rmk_scalar}; we define the functional
\begin{align}\label{eq_Emu}
	E_{\mu}: 
	\L{p^\prime} \to (0, \infty],
	\quad
	 E_{\mu}(\bar{u}) := 
	\frac{p-2}{2p} \left(
	\frac{\left[ \int_{\R^N} a(x)^{1 - p^\prime} |\bar{u}|^{p^\prime} \: \mathrm{d}x \right]^\frac{1}{p^\prime}}
	{\left[ \int_{\R^N} \bar{u} \Psi_\mu \ast \bar{u} \: \mathrm{d}x \right]_+^\frac{1}{2}}
	\right)^\frac{2p}{p-2}
\end{align}
again with $E_\mu (0) := + \infty$. Then $E_\mu(\bar{u}) = F_{\mu\nu}(\bar{u}, 0)$ 
for $\bar{u} \in \L{p'}$ by Lemma~\ref{lem_prop_hb} (d)
and the scalar mountain pass level $c_\mu$ is the infimum of $E_\mu$, attained in particular at critical points of $I_\mu$ on the level $c_\mu$.
We derive some direct consequences describing the relation between the mountain pass level associated with the system~\eqref{eq_system_dual} and the scalar mountain pass level. Recall that a critical point of $J_{\mu\nu}$ on the mountain pass level $c_{\mu\nu}$ is said to be a dual ground state.
\begin{lem}\label{lem_scalarmp}
We assume that conditions~\eqref{eq_assumptions} hold. Then we have the following:
\begin{itemize}
\item[(i)] The inequality $c_{\mu\nu} \leq c_\mu$ holds.
\item[(ii)] If $(\bar{u}_0, 0)$ is a semitrivial dual ground state of $J_{\mu\nu}$,
then $c_{\mu\nu} = c_\mu$ and $\bar{u}_0$ is a dual ground state of the scalar functional $I_\mu$.
\end{itemize}
\end{lem}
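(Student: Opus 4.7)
For part (i), the plan is to reduce everything to the inf-sup characterization of Lemma~\ref{lem_mplevel}. The key observation is that, by Lemma~\ref{lem_prop_hb}~(d), we have $p^\prime h(x, \bar{u}, 0) = a(x)^{1-p^\prime} |\bar{u}|^{p^\prime}$, so comparing the defining formulas~\eqref{eq_Fmunu} and~\eqref{eq_Emu} gives the pointwise identity $F_{\mu\nu}(\bar{u}, 0) = E_\mu(\bar{u})$ for every $\bar{u} \in L^{p^\prime}(\R^N)$. Since Lemma~\ref{lem_mplevel} characterizes $c_{\mu\nu}$ as the infimum of $F_{\mu\nu}$ over $L^{p^\prime}(\R^N) \times L^{p^\prime}(\R^N)$ and (in its scalar version as noted in Remark~\ref{rmk_scalar}) $c_\mu$ as the infimum of $E_\mu$, I would simply restrict the infimum for $c_{\mu\nu}$ to pairs of the form $(\bar{u}, 0)$ and conclude
\begin{equation*}
	c_{\mu\nu} \leq \inf_{\bar{u} \in L^{p^\prime}(\R^N)} F_{\mu\nu}(\bar{u}, 0) = \inf_{\bar{u} \in L^{p^\prime}(\R^N)} E_\mu(\bar{u}) = c_\mu.
\end{equation*}

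For part (ii), let $(\bar{u}_0, 0)$ be a semitrivial dual ground state of $J_{\mu\nu}$. Using $h(x, \bar{u}, 0) = \frac{a(x)^{1-p^\prime}}{p^\prime} |\bar{u}|^{p^\prime}$ again, formula~\eqref{eq_Jmunub} yields $J_{\mu\nu}(\bar{u}, 0) = I_\mu(\bar{u})$; likewise, testing $J_{\mu\nu}^\prime(\bar{u}_0, 0) = 0$ against $(\varphi, 0)$ shows $I_\mu^\prime(\bar{u}_0)[\varphi] = 0$ for every $\varphi \in L^{p^\prime}(\R^N)$. Hence $\bar{u}_0 \neq 0$ is a critical point of $I_\mu$ with $I_\mu(\bar{u}_0) = c_{\mu\nu}$.

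The next step is to show $E_\mu(\bar{u}_0) = I_\mu(\bar{u}_0)$. Since $c_{\mu\nu} > 0$, the Nehari-type identity $I_\mu^\prime(\bar{u}_0)[\bar{u}_0] = 0$ forces $\int \bar{u}_0 \Psi_\mu \ast \bar{u}_0 \, \mathrm{d}x = \int a^{1-p^\prime} |\bar{u}_0|^{p^\prime} \, \mathrm{d}x > 0$, so the denominator in~\eqref{eq_Emu} is strictly positive and the one-dimensional map $\tau \mapsto I_\mu(\tau \bar{u}_0)$ has a unique critical point on $(0, \infty)$, which is a global maximum and must coincide with $\tau = 1$. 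As explained in the discussion preceding Lemma~\ref{lem_mplevel}, this gives
\begin{equation*}
	E_\mu(\bar{u}_0) = \sup_{\tau > 0} I_\mu(\tau \bar{u}_0) = I_\mu(\bar{u}_0) = c_{\mu\nu}.
\end{equation*}

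Finally, combining this with the scalar inf-sup characterization gives $c_\mu = \inf E_\mu \leq E_\mu(\bar{u}_0) = c_{\mu\nu}$, and together with part (i) this yields $c_{\mu\nu} = c_\mu$. Then $\bar{u}_0$ attains the infimum of $E_\mu$, so by the second statement of Lemma~\ref{lem_mplevel} applied in the scalar case, $\bar{u}_0$ is a (nonzero multiple of a) critical point of $I_\mu$ at the mountain pass level $c_\mu$; since we already know it is a critical point with $I_\mu(\bar{u}_0) = c_\mu$, it is a dual ground state of $I_\mu$. I expect no serious obstacle here: the entire argument is an exploitation of the identities $h(x, \bar{u}, 0) = \frac{a(x)^{1-p^\prime}}{p^\prime} |\bar{u}|^{p^\prime}$ and $F_{\mu\nu}(\bar{u}, 0) = E_\mu(\bar{u})$ together with the inf-sup characterization already established; the only point demanding a moment of care is verifying positivity of the denominator in $E_\mu(\bar{u}_0)$ so that the $\sup_\tau$ formula is actually valid.
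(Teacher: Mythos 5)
Your proposal is correct and follows essentially the same strategy as the paper: for (i) you exploit $F_{\mu\nu}(\bar u,0)=E_\mu(\bar u)$ (equivalently $J_{\mu\nu}(\bar u,0)=I_\mu(\bar u)$) together with the inf-sup characterization, and for (ii) you pass to $I_\mu'(\bar u_0)=0$, $I_\mu(\bar u_0)=c_{\mu\nu}$ and sandwich $c_\mu \leq E_\mu(\bar u_0)=c_{\mu\nu}\leq c_\mu$. The only minor deviation is that you obtain $E_\mu(\bar u_0)=c_{\mu\nu}$ by a direct Nehari-type computation (checking that the denominator in~\eqref{eq_Emu} is positive and that $\tau=1$ maximizes $\tau\mapsto I_\mu(\tau\bar u_0)$), whereas the paper reaches the same identity $F_{\mu\nu}(\bar u_0,0)=c_{\mu\nu}$ by directly invoking the second statement of Lemma~\ref{lem_mplevel}, which says that a critical point of $J_{\mu\nu}$ at level $c_{\mu\nu}$ is a minimizer of $F_{\mu\nu}$; both routes are short and equivalent.
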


\begin{proof}
	\begin{itemize} 
\item[(i)]
	This is a consequence of the fact that 
	$I_\mu(\bar{u}) = J_{\mu\nu}(\bar{u}, 0)$ for $\bar{u} \in \L{p^\prime}$.

\item[(ii)]
	Assume that $(\bar{u}_0, 0) \in \LL{p'}$ is a dual ground state 
	of $J_{\mu\nu}$, i.e. $J'_{\mu\nu}(\bar{u}_0, 0) = 0$ and  $J_{\mu\nu}(\bar{u}_0, 0) = c_{\mu\nu}$.
	As we have $I_\mu(\bar{w}) = J_{\mu\nu}(\bar{w}, 0)$ for all $w \in \L{p'}$, this implies
	$I_\mu^\prime(\bar{u}_0) = 0$ and $I_\mu(\bar{u}_0) = c_{\mu\nu}$. 
	Then, with~\eqref{eq_Emu} and Lemma~\ref{lem_mplevel},
	\begin{align*}
		c_\mu = \inf_{\bar{u} \in \L{p'}}  E_\mu (\bar{u}) 
		\leq E_\mu (\bar{u}_0) = F_{\mu\nu} (\bar{u}_0, 0) = c_{\mu\nu} \overset{(i)}{\leq} c_\mu,	
	\end{align*}		
	we conclude $c_{\mu\nu} = c_\mu$ and therefore $\bar{u}_0$ is a dual ground state of $I_\mu$. 

\end{itemize}
\end{proof}

\section{Proof of Theorems~\ref{thm_c_mumu_plus-1}~and~\ref{thm_c_mumu_plus-2}}

By  $h_\pm: \R^2 \to \R$ we denote the Legendre transforms of the functions 
	$f_\pm: \R^2 \to \R, \: \: 
	f_\pm (s, t) := \frac{1}{p} \left( |s|^p + 2b_\pm \: |s|^\frac{p}{2} |t|^\frac{p}{2} + |t|^p \right)$.	
We recall that $a_- \leq a(x) \leq a_+$ and $b_- \leq b(x) \leq b_+$ hold for almost all $x \in \R^N$.
As a direct consequence of Lemma~\ref{lem_prop_hb} (a), we have the following chain of  inequalities for all $\bar{s}, \bar{t} \in \R$ and $x \in \R^N$:
\begin{equation}\label{eq_mono-a}
	a_+^{1 - p^\prime} h_+(\bar{s}, \bar{t})
	\leq a(x)^{1 - p^\prime} h_+(\bar{s}, \bar{t})
	\leq h(x, \bar{s}, \bar{t})
	\leq a(x)^{1 - p^\prime} h_-(\bar{s}, \bar{t})
	\leq a_-^{1 - p^\prime} h_-(\bar{s}, \bar{t}).
\end{equation}
We recall that, as in Lemma~\ref{lem_prop_hb} (a), we have for $\bar{s}, \bar{t} \in \R$
\begin{equation}\label{eq_hb}
	h_\pm(\bar{s}, \bar{t}) = \frac{1}{p^\prime} \left[\sup_{\sigma > 0} 
		\frac{|\bar{s}| + \sigma |\bar{t}|}{ (1 + 2b_\pm \, \sigma^\frac{p}{2} + \sigma^p)^\frac{1}{p}}
		\right]^{p^\prime}.
\end{equation}

\begin{proof}[\textbf{Proof of Theorem~\ref{thm_c_mumu_plus-1} (a)}]
	We consider a minimizer $\bar{w} \in \L{p^\prime}$, $\bar{w} \neq 0$ of the scalar functional $E_\mu$. 
	Multiplying with a suitable constant, we assume w.l.o.g. 
	$\int_{\R^N} \bar{w} \Psi_\mu \ast \bar{w} \: \mathrm{d}x = 1$ and hence, 
	recalling definition~\eqref{eq_Emu},
	\begin{align}\label{eq_prf_thm2a}
		c_\mu 
		= E_\mu (\bar{w}) 
		= \frac{p-2}{2p} \left( \int_{\R^N} a(x)^{1 - p^\prime} |\bar{w}|^{p^\prime} 
		\: \mathrm{d}x \right)^{\frac{2(p-1)}{p-2}} 
		= \frac{p-2}{2p} \norm{a^{-1/p} \bar{w}}_{p'}^{\frac{2p}{p-2}}.
	\end{align}
		
	Theorem 1.1 in \cite{Evequoz_plane} for $N = 2$ and Lemma 4.3 in \cite{EvequozWeth} for $N \geq 3$, 
	respectively, assure that $\bar{w}$ is continuous. Moreover, equations (6) in 
	\cite{Evequoz_plane} and (11), (12) in \cite{EvequozWeth} imply that $\Psi_\nu > 0$ near zero. 
	Hence there exist $r > 0$ and $x_0 \in \R^N$ with $\bar{w} > 0$ on $B_r(x_0)$ (or with
	$\bar{w} < 0$ on $B_r(x_0)$) and $\Psi_\nu > 0$ on $B_{2r}(0)$.  We then have 
	\begin{align*}
		q := 
		\int_{\R^N} \left( \bar{w} \mathds{1}_{B_r(x_0)} \right) 
		\Psi_\nu \ast \left( \bar{w} \mathds{1}_{B_r(x_0)} \right)
		\: \mathrm{d}x
		= \int_{B_r(x_0)} \int_{B_r(x_0)}  \bar{w}(y) \bar{w}(z) \Psi_\nu(y-z) \: \mathrm{d}y \mathrm{d}z > 0
	\end{align*}
	and estimate for sufficiently small $\eta > 0$:
	\begin{align*}
		c_{\mu\nu}
		&\leq 
		F_{\mu\nu} (\bar{w}, \eta \, \bar{w} \mathds{1}_{B_r(x_0)})
		\\
		&=
		\frac{p-2}{2p}  \left(
		\frac{\left( \int_{\R^N} p^\prime \, h(x, \bar{w}, \eta \, \bar{w} \mathds{1}_{B_r(x_0)}) 	
		\: \mathrm{d}x \right)^\frac{1}{p^\prime}}
		{\left(\int_{\R^N} \bar{w} \Psi_\mu \ast \bar{w} 
		+ \eta^2 (\bar{w} \mathds{1}_{B_r(x_0)}) \Psi_\nu \ast (\bar{w} \mathds{1}_{B_r(x_0)}) 
		\: \mathrm{d}x \right)_+^\frac{1}{2}}	
		\right)^\frac{2p}{p-2} 
		\\
		&=
		\frac{p-2}{2p}  \left(
		\frac{\left( \int_{\R^N} p^\prime \, h(x, \bar{w}, \eta \, \bar{w} \mathds{1}_{B_r(x_0)}) 	
		\: \mathrm{d}x \right)^\frac{1}{p^\prime}}
		{(1	+ \eta^2 q)^\frac{1}{2}}	
		\right)^\frac{2p}{p-2} 
		\\
		&\overset{\eqref{eq_mono-a}}{\leq}
		\frac{p-2}{2p}  \left(
		\frac{\left( \int_{\R^N} p^\prime \, a(x)^{1 - p^\prime} h_-(\bar{w}, \eta \, \bar{w} \mathds{1}_{B_r(x_0)}) 	
		\: \mathrm{d}x \right)^\frac{1}{p^\prime}}
		{(1	+ \eta^2 q)^\frac{1}{2}}	
		\right)^\frac{2p}{p-2} 
		\\
		&\overset{\eqref{eq_hb}}{=}
		\frac{p-2}{2p}  \left(
		\frac{1}{{(1	+ \eta^2 q)^\frac{1}{2}}}
		\left( \int_{\R^N} a(x)^{1 - p^\prime} \,
		\left(\sup_{\sigma > 0} \frac{|\bar{w}(x)| + \sigma \eta |\bar{w}(x)|\mathds{1}_{B_r(x_0)}(x) }
		{(1 + 2 b_- \sigma^{\frac{p}{2}} + \sigma^p)^{\frac{1}{p}}}\right)^{p^\prime}
		\: \mathrm{d}x \right)^\frac{1}{p^\prime}
		\right)^\frac{2p}{p-2} 
		\\
		&\leq
		\frac{p-2}{2p} 
		\norm{a^{-1/p} \bar{w}}_{p'}^{\frac{2p}{p-2}} \cdot
		\left(\sup_{\sigma > 0} \frac{1 + \sigma \eta }
		{(1 + 2 b_- \sigma^{\frac{p}{2}} + \sigma^p)^{\frac{1}{p}} (1 + \eta^2 q)^{\frac{1}{2}}}
		\right)^\frac{2p}{p-2}
		\\
		&\overset{\eqref{eq_prf_thm2a}}{=}
		c_\mu
		\cdot
		\left(\sup_{\sigma > 0} \frac{1 + \sigma \eta }
		{(1 + 2 b_- \sigma^{\frac{p}{2}} + \sigma^p)^{\frac{1}{p}} (1 + \eta^2 q)^{\frac{1}{2}}}\right)^\frac{2p}{p-2}
		< c_\mu.
	\end{align*}
	The latter estimate holds for sufficiently small positive $\eta$ because we have, 
	with $\tilde{b}_- := \min\{ 1, b_- \} > 0$ and Taylor's Theorem,
	\begin{align*}
		\sup_{\sigma > 0} \: 
		\frac{1 + \sigma \eta}
		{(1 + 2 b_- \sigma^\frac{p}{2} + \sigma^p)^\frac{1}{p}(1 + \eta^2 q)^{\frac{1}{2}}}
		&\leq
		\sup_{\sigma > 0} \: 
		\frac{1 + \sigma \eta}
		{(1 + \tilde{b}_- \sigma^\frac{p}{2})^\frac{2}{p}(1 + \eta^2 q)^{\frac{1}{2}}}
		\\
		&=
		\frac{\big( 1 + \eta^\frac{p}{p-2} \tilde{b}_-^{-\frac{2}{p-2}} \big)^\frac{p-2}{p}}
		{(1 + \eta^2 q)^{\frac{1}{2}}}
		\\
		&=
		\frac{1 + \frac{p-2}{p} \eta^\frac{p}{p-2} 
		\tilde{b}_-^{-\frac{2}{p-2}} + o\big(\eta^\frac{p}{p-2}\big)}
		{1 + \frac{1}{2} \eta^2 q + o(\eta^2)}
		\\
		&= 
		1 - \frac{1}{2} \eta^2 q + o(\eta^2)
		& \text{as } \eta \searrow 0
	\end{align*}
	where we used that $\frac{p}{p-2} > 2$ since $2 < p < 4$. 
	We have shown that $c_{\mu\nu} < c_\mu$. Similarly, one proves that $c_{\mu\nu} < c_\nu$. 			
	Lemma~\ref{lem_scalarmp} (ii) implies that $J_{\mu\nu}$ cannot have a semitrivial dual ground state. 
\end{proof}

The proof of Theorem~\ref{thm_c_mumu_plus-1} (b) is based on a continuity argument which requires additional knowledge of the scalar case for $a \equiv 1$. Here we let
\begin{align*}
	D_\lambda (\bar{w}) := \frac{p-2}{2p}
	\left(\frac{\left( \int_{\R^N} |\bar{w}|^{p^\prime} 
	\: \mathrm{d}x \right)^\frac{1}{p^\prime}}
	{\left(\int_{\R^N} \bar{w} \Psi_\lambda \ast \bar{w} \: \mathrm{d}x \right)_+^\frac{1}{2}}
	\right)^\frac{2p}{p-2},
	\qquad
	d_\lambda
	:=
	\inf_{\bar{w} \in \L{p^\prime}} D_\lambda (\bar{w})
\end{align*}
and, in view of definition~\eqref{eq_Emu}, immediately note that
\begin{equation}\label{eq_cmumu-a}
	a_+^{-\frac{2}{p-2}} d_\lambda 
	\leq c_\lambda
	\leq a_-^{-\frac{2}{p-2}} d_\lambda.
\end{equation}
Remark~\ref{rmk_scalar} guarantees that the functional $I_1$ with $a(x) \equiv 1$ admits a dual ground state $\bar{z} \in \L{p^\prime}$ which, by the remarks following Lemma~\ref{lem_mplevel}, is a minimizer of the functional $D_1$. We fix such a minimizer $\bar{z}$ and introduce for $\lambda > 0$ the rescaled functions
\begin{align}\label{eq_zRescale}
	\bar{z}_\lambda \in \L{p^\prime},
	\qquad
	\bar{z}_\lambda(x) := \lambda^\frac{N+2}{4} \bar{z}(\sqrt{\lambda} \, x),
	\qquad x \in \R^N.
\end{align}
Then $\bar{z}_\lambda$ is a minimizer of the functional $D_\lambda$, 
and we have
\begin{equation}\label{eq_LemContC}
	d_\lambda = \lambda^{\frac{p}{p-2} - \frac{N}{2}} \cdot d_1
	= \lambda^{\frac{p}{p-2} - \frac{N}{2}}	\cdot \frac{p-2}{2p} \left(
		\frac{\left[\int_{\R^N} |\bar{z}|^{p'} \: \mathrm{d}x\right]^\frac{1}{p'}}
		{\left[\int_{\R^N} \bar{z} \Psi_1 \ast \bar{z} \: \mathrm{d}x\right]^\frac{1}{2}}
	\right)^\frac{2p}{p-2}.
\end{equation}
The proof of~\eqref{eq_LemContC} is based on the observation
\begin{equation}\label{eq_LemContB}
	\Psi_\lambda(x)  = \lambda^\frac{N-2}{2} \Psi_1 \left( \sqrt{\lambda} x \right)
	\quad
	\text{and}
	\quad
	\int_{\R^N} \bar{z}_\lambda \Psi_\lambda \ast \bar{z}_\lambda \: \mathrm{d}x 
		= \int_{\R^N} \bar{z} \Psi_1 \ast \bar{z} \: \mathrm{d}x
\end{equation}
for $x \in \R^N$ and $\lambda > 0$, see~\eqref{eq_phi}.

\begin{proof}[\textbf{Proof of Theorem~\ref{thm_c_mumu_plus-1} (b)}]
	We aim to prove $c_{\mu\nu} < \min\{c_\mu, c_\nu\}$ 
for sufficiently small values of $\left| \sqrt{\frac{\mu}{\nu}} - 1 \right|$, 
which again yields the assertion when applying	 Lemma~\ref{lem_scalarmp} (ii).
With $\bar{z}, \bar{z}_{\mu}, \bar{z}_{\nu} \in \L{p'}$ as above, we estimate as follows:
	\begin{align*}
	c_{\mu\nu}
	&\leq
	F_{\mu\nu}\left(\bar{z}_\mu, \bar{z}_\nu\right)
	\\*
	&\: =
	\frac{p-2}{2p}
	\left(\frac{\left[ \int_{\R^N} p^\prime \, 
	h\left(x, \bar{z}_\mu, \bar{z}_\nu\right) 	
	\: \mathrm{d}x \right]^\frac{1}{p^\prime}}
	{\left[ \int_{\R^N} \bar{z}_\mu 	\Psi_\mu \ast \bar{z}_\mu \: \mathrm{d}x
	+
	\int_{\R^N} \bar{z}_\nu 	\Psi_\nu \ast \bar{z}_\nu \: \mathrm{d}x \right]_+^\frac{1}{2}}
	\right)^\frac{2p}{p-2}
	\\	
	&\: \overset{\eqref{eq_LemContB}}{=}
	\frac{p-2}{2p}
	\left(\frac{\left[ \int_{\R^N} p^\prime \, 
	h\left(x, \bar{z}_\mu, \bar{z}_\nu\right) 	
	\: \mathrm{d}x \right]^\frac{1}{p^\prime}}
	{\left[ 2 \, \int_{\R^N} \bar{z} \Psi_1 \ast \bar{z} \: \mathrm{d}x \right]^\frac{1}{2}}
	\right)^\frac{2p}{p-2}
	\\
	&\: \overset{\eqref{eq_LemContC}}{=}
	\frac{d_1}{2^\frac{p}{p-2}} 
	\left(\frac{ \int_{\R^N} p^\prime \, 
	h\left(x, \bar{z}_\mu, \bar{z}_\nu\right) 	
	\: \mathrm{d}x }
	{\int_{\R^N} |\bar{z}|^{p'} \: \mathrm{d}x}
	\right)^\frac{2(p-1)}{p-2}
	\\
	&\: \overset{\eqref{eq_mono-a}}{\leq}
	\frac{d_1}{2^\frac{p}{p-2}} 
	\left(\frac{ a_-^{1 - p^\prime} \int_{\R^N} p^\prime \, 
	h_-\left(\bar{z}_\mu, \bar{z}_\nu\right) 	
	\: \mathrm{d}x }
	{\int_{\R^N} |\bar{z}|^{p'} \: \mathrm{d}x}
	\right)^\frac{2(p-1)}{p-2}
	\\
	&\: \overset{\text{Lem.}\ref{lem_prop_hb}(d)}{\leq}
	\frac{d_1}{2^\frac{p}{p-2} a_-^\frac{2}{p-2}} 
	\left(\frac{\int_{\R^N} 
	h_-\left(\bar{z}_\mu, \bar{z}_\nu\right) 	
	\: \mathrm{d}x}
	{\frac{1}{2} (1 + b_-)^\frac{1}{p-1}  \int_{\R^N} h_-(\bar{z}, \bar{z}) \: \mathrm{d}x}
	\right)^\frac{2(p-1)}{p-2}
	\\
	&\: =
	d_1 \cdot  \left( \frac{2^\frac{p-2}{2}}{1 + b_-} \, \frac{1}{a_-}  \right)^\frac{2}{p-2}
	\left(\frac{\int_{\R^N} 
	h_-\left(\bar{z}_\mu, \bar{z}_\nu\right) 		
	\: \mathrm{d}x}
	{\int_{\R^N} h_-(\bar{z}, \bar{z}) \: \mathrm{d}x}
	\right)^\frac{2(p-1)}{p-2}.
\end{align*}
We now introduce $\lambda := \sqrt{\frac{\mu}{\nu}}$.
Then, with $h_-(\alpha \bar{s}, \alpha \bar{t}) = |\alpha|^{p'} h_-(\bar{s}, \bar{t})$ (see equation~\eqref{eq_hb}) and substitution:
\begin{align*}
	\left( \int_{\R^N} h_-\left( \bar{z}_\mu, \bar{z}_\nu\right) 		
	\: \mathrm{d}x \right)^\frac{2(p-1)}{p-2}
	&= 
	\left( \int_{\R^N} h_-\left( \mu^\frac{N+2}{4} \bar{z}(\sqrt{\mu} x), 
	\nu^\frac{N+2}{4} \bar{z}(\sqrt{\nu} x)\right) 		
	\: \mathrm{d}x \right)^\frac{2(p-1)}{p-2}
	\\
	&= 
	\left( \int_{\R^N} h_-\left( \lambda^\frac{N+2}{2} \nu^\frac{N+2}{4} \bar{z}(\lambda \sqrt{\nu} x), 
	\nu^\frac{N+2}{4} \bar{z}(\sqrt{\nu} x)\right) 		
	\: \mathrm{d}x \right)^\frac{2(p-1)}{p-2}
	\\
	& =
	\nu^{\frac{p}{p-2} - \frac{N}{2}}
	\left( \int_{\R^N} h_-\left( \lambda^\frac{N+2}{2} \bar{z}(\lambda y), 
	\bar{z}(y)\right) 	\: \mathrm{d}y \right)^\frac{2(p-1)}{p-2}
	\\
	& =
	\nu^{\frac{p}{p-2} - \frac{N}{2}}
	\left( \int_{\R^N} h_-\left( \bar{z}_{\lambda^2}, \bar{z}\right) 	\: \mathrm{d}y \right)^\frac{2(p-1)}{p-2}
\end{align*}
We insert this into the previous estimate and find
\begin{align*}
	c_{\mu\nu}
	&\leq
	d_1\: \nu^{\frac{p}{p-2} - \frac{N}{2}} 
	\cdot  \left( \frac{2^\frac{p-2}{2}}{1 + b_-} \, \frac{1}{a_-}  \right)^\frac{2}{p-2}
	\left(\frac{\int_{\R^N} 
	h_-\left(\bar{z}_{\lambda^2}, \bar{z}\right) 		
	\: \mathrm{d}x}
	{\int_{\R^N} h_-(\bar{z}, \bar{z}) \: \mathrm{d}x}
	\right)^\frac{2(p-1)}{p-2}
	\\
	&\overset{\eqref{eq_LemContC}}{=}
	d_\nu \cdot  \left( \frac{2^\frac{p-2}{2}}{1 + b_-} \, \frac{1}{a_-}  \right)^\frac{2}{p-2}
	\left(\frac{\int_{\R^N} 
	h_-\left(\bar{z}_{\lambda^2}, \bar{z}\right) 		
	\: \mathrm{d}x}
	{\int_{\R^N} h_-(\bar{z}, \bar{z}) \: \mathrm{d}x}
	\right)^\frac{2(p-1)}{p-2}
	\\
	&\overset{\eqref{eq_cmumu-a}}{\leq}
	c_\nu \cdot  \left( \frac{2^\frac{p-2}{2}}{1 + b_-} \, \frac{a_+}{a_-}  \right)^\frac{2}{p-2}
	\left(\frac{\int_{\R^N} 
	h_-\left(\bar{z}_{\lambda^2}, \bar{z}\right) 		
	\: \mathrm{d}x}
	{\int_{\R^N} h_-(\bar{z}, \bar{z}) \: \mathrm{d}x}
	\right)^\frac{2(p-1)}{p-2}.
\end{align*}
Similarly,
\begin{align*}
	c_{\mu\nu}
	&\leq
	c_\mu \cdot  \left( \frac{2^\frac{p-2}{2}}{1 + b_-} \, \frac{a_+}{a_-}  \right)^\frac{2}{p-2}
	\left(\frac{\int_{\R^N} 
	h_-\left(\bar{z}_{\lambda^{-2}}, \bar{z}\right) 		
	\: \mathrm{d}x}
	{\int_{\R^N} h_-(\bar{z}, \bar{z}) \: \mathrm{d}x}
	\right)^\frac{2(p-1)}{p-2}.
\end{align*}
Notice that the terms on the right 
depend continuously on the parameter $\lambda$ since $\lambda \mapsto 
\lambda^\frac{N+2}{2} \bar{z}(\lambda \:\cdot\:)$ is continuous in $\L{p^\prime}$. 
Hence,
\begin{align*}
	&\frac{\int_{\R^N} 
	h_-\left(\bar{z}_{\lambda^{\pm 2}}, \bar{z}\right) 		
	\: \mathrm{d}x}
	{\int_{\R^N} h_-(\bar{z}, \bar{z}) \: \mathrm{d}x}
	\to 1
	\qquad \text{as } \lambda \to 1.
\end{align*}
As we have assumed $\frac{2^\frac{p-2}{2}}{1 + b_-} \cdot \frac{a_+}{a_-} < 1$,
we find $\delta > 0$ such that $|\lambda - 1 | < \delta$ implies
\begin{align*}
	\left( \frac{2^\frac{p-2}{2}}{1 + b_-} \, \frac{a_+}{a_-}  \right)^\frac{2}{p-2}
	\left(\frac{\int_{\R^N} 
	h_-\left(\bar{z}_{\lambda^{\pm 2}}, \bar{z}\right) 		
	\: \mathrm{d}x}
	{\int_{\R^N} h_-(\bar{z}, \bar{z}) \: \mathrm{d}x}
	\right)^\frac{2(p-1)}{p-2}
	< 1
	\quad
	\text{and hence}
	\quad
	c_{\mu\nu} < \min\{c_\mu, c_\nu\}.
\end{align*}
Lemma~\ref{lem_scalarmp} (ii) ensures that, for such $\mu$ and $\nu$, every dual ground state is fully nontrivial.
\end{proof}
\begin{proof}[\textbf{Proof of Theorem~\ref{thm_c_mumu_plus-2}}]
	We consider a dual ground state $(\bar{u}, \bar{v}) \in \LL{p^\prime}$ of the functional $J_{\mu\nu}$, hence
	a minimizer of $F_{\mu\nu}$, and w.l.o.g. $\bar{u} \neq 0$.
	We write $\norm{a^{-1/p} \bar{v}}_{p^\prime} = \eta_0 \norm{a^{-1/p} \bar{u}}_{p^\prime}$
	for some $\eta_0 \geq 0$ and aim to show that necessarily $\eta_0 = 0$, hence $\bar{v} = 0$. \\ 
	Recalling that $c_{\mu}, c_{\nu}$ 
	are the minima of $E_{\mu}, E_{\nu}$, respectively, we estimate with~\eqref{eq_Emu}
	\begin{align*}
		\left(\int_{\R^N} \bar{u} \Psi_\mu \ast \bar{u} \: \mathrm{d}x\right)_+
		&\leq \left( \frac{2p}{p-2} c_\mu\right)^{-\frac{p-2}{p}} \, \norm{a^{-1/p} \bar{u}}_{p^\prime}^2,
		\\
		\left(\int_{\R^N} \bar{v} \Psi_\nu \ast \bar{v} \: \mathrm{d}x\right)_+
		&\leq \left( \frac{2p}{p-2} c_\nu\right)^{-\frac{p-2}{p}} \, \norm{a^{-1/p} \bar{v}}_{p^\prime}^2.
	\end{align*}
	In the appendix, we will prove the estimate
	\begin{equation}\label{eq_auxiliary_h-below}
		\int_{\R^N} h(x, \bar{u}, \bar{v}) \: \mathrm{d}x 
		\geq h_+\left(\norm{a^{-1/p} \, \bar{u}}_{p^\prime}, 
		\norm{a^{-1/p} \, \bar{v}}_{p^\prime}\right).
	\end{equation}
	Both inequalities combined, we have
	\begin{align*}
		c_{\mu\nu} 
		&= F_{\mu\nu}(\bar{u}, \bar{v})
		\\		
		&\geq
		 \frac{p-2}{2p} \frac{\bigg[ 
		 p^\prime \, h_+\left(\norm{a^{-1/p} \, \bar{u}}_{p^\prime}, 
		\eta_0 \norm{a^{-1/p} \, \bar{u}}_{p^\prime}\right) 
		\bigg]^\frac{2(p-1)}{p-2}}
		{\bigg[  \left( \frac{2p}{p-2} c_\mu\right)^{-\frac{p-2}{p}} \, \norm{a^{-1/p} \bar{u}}_{p^\prime}^2
		+  \left( \frac{2p}{p-2} c_\nu\right)^{-\frac{p-2}{p}} \, \eta_0^2 \norm{a^{-1/p} \bar{u}}_{p^\prime}^2
		\bigg]^\frac{p}{p-2}}
		\\		
		&\overset{\text{Lem.} \ref{lem_prop_hb} (a)}{=}
		 \frac{\bigg[ 
		 p^\prime \, \norm{a^{-1/p} \, \bar{u}}_{p^\prime}^{p^\prime} \, 
		 h_+\left(1, \eta_0\right) 
		\bigg]^\frac{2(p-1)}{p-2}}
		{\bigg[  \left( \left(  c_\mu\right)^{-\frac{p-2}{p}}	+  \left(  c_\nu\right)^{-\frac{p-2}{p}} \, \eta_0^2 \right) 		
		\, \norm{a^{-1/p} \bar{u}}_{p^\prime}^2
		\bigg]^\frac{p}{p-2}}
		\\
		&\geq
		\min\{ c_\mu, c_\nu \} \, \left( \frac{\big[ 
		p^\prime \,  h_+\left( 1, \eta_0 \right) 
		\big]^\frac{1}{p^\prime}}
		{\big[ 1 + \eta_0^2  \big]^\frac{1}{2}} \right)^\frac{2p}{p-2}
		\\
		&\overset{\eqref{eq_hb}}{=}
		\min\{ c_\mu, c_\nu \} \: \left( \sup_{\sigma > 0} \frac{(1 + \sigma \eta_0)}
		{(1 + 2 b_+ \sigma^\frac{p}{2} + \sigma^p)^\frac{1}{p}(1 + \eta_0^2)^\frac{1}{2}}
		\right)^\frac{2p}{p-2}
		\\
		& \geq
		\min\{ c_\mu, c_\nu \} \: \left(\frac{(1 + \eta_0^2)^\frac{1}{2}}
		{(1 + 2 b_+ \eta_0^\frac{p}{2} + \eta_0^p)^\frac{1}{p}}
		\right)^\frac{2p}{p-2}
		\\
		& \geq
		\min\{ c_\mu, c_\nu \} \: \left(\frac{(1 + \eta_0^2)^\frac{1}{2}}
		{(1 + (2^\frac{p}{2} - 2) \eta_0^\frac{p}{2} + \eta_0^p)^\frac{1}{p}}
		\right)^\frac{2p}{p-2}
		\\
		& \geq \min\{ c_\mu, c_\nu \}.
	\end{align*}
	The latter estimate holds since
	\begin{equation}\label{eq_auxiliary_mini}
		\forall \: \eta \geq 0
		\qquad
		\frac{(1 + \eta^2)^\frac{1}{2}}
		{(1 + (2^\frac{p}{2} - 2) \eta^\frac{p}{2} + \eta^p)^\frac{1}{p}} 
		\geq 1,
	\end{equation}
	which we will prove in the appendix.
	Lemma~\ref{lem_scalarmp} (ii) yields $c_{\mu\nu} \leq \min\{ c_\mu, c_\nu \}$,
	and thus we have $c_{\mu\nu} = \min\{ c_\mu, c_\nu \}$ and equality must 
	hold in all above estimates.
	But then, since we assume $b_+ < 2^\frac{p-2}{2} - 1$, we infer $\eta_0 = 0$. 	
	Hence, the dual ground state is semitrivial with $\bar{v} = 0$.
\end{proof}
\begin{proof}[\textbf{Proof of Corollary~\ref{cor_plus}}]
	The previously proved Theorems cover most cases:
If $2 < p < 4 \text{ and } b > 0$, Theorem~\ref{thm_c_mumu_plus-1} (a) states that every dual ground state of $J_{\mu\mu}$ is fully nontrivial; so does Theorem~\ref{thm_c_mumu_plus-1} (b) in case $p \geq 4 \text{ and } b > 2^{\frac{p-2}{2}} - 1$. (Notice that we assume $\nu = \mu$.) If, however, $p \geq 4 \text{ and } 0 \leq b < 2^{\frac{p-2}{2}} - 1$, Theorem~\ref{thm_c_mumu_plus-2} assures that dual ground states of $J_{\mu\mu}$ are semitrivial.
So only two cases remain open.

\absatz

\textit{Assume $2 < p < 4$ and $b = 0$.}

The proof then follows the lines of that of Theorem~\ref{thm_c_mumu_plus-2}.
Considering a dual ground state $(\bar{u}, \bar{v}) \in \LL{p'}$ of $J_{\mu\nu}$ and assuming  $\norm{a^{-1/p} \bar{v}}_{p^\prime} = \eta_0 \norm{a^{-1/p} \bar{u}}_{p^\prime}$, 
we again aim to prove that $\eta_0 = 0$.
The same estimate as in the previous proof yields
\begin{align*}
	c_{\mu\nu} 
		&\geq \min\{ c_\mu, c_\nu \} \: \left(\sup_{\sigma > 0} \frac{(1 + \sigma \eta_0)}
		{(1 + \sigma^p)^\frac{1}{p}(1 + \eta_0^2)^\frac{1}{2}} \right)^\frac{2p}{p-2}
		\\
		&\geq
		\min\{ c_\mu, c_\nu \} \: \left(\frac{(1 + \eta_0^{p'})^\frac{1}{p'}}
		{(1 + \eta_0^2)^\frac{1}{2}}\right)^\frac{2p}{p-2}
		\\
		&\overset{(p' < 2)}{\geq}
		\min\{ c_\mu, c_\nu \}
\end{align*}
with equality if and only if $\eta_0 = 0$. Since $c_{\mu\nu} \leq \min\{ c_\mu, c_\nu \}$ by Lemma~\ref{lem_scalarmp} (i), this implies $\eta_0 = 0$.

\absatz

\textit{Assume $p \geq 4$ and $b = 2^\frac{p-2}{2} - 1$.}

In this case, one can show as in the proof of Theorem~\ref{thm_c_mumu_plus-2} that we have $c_{\mu\mu} = c_\mu$. 
For any scalar dual ground state $\bar{w} \in \L{p^\prime}$ of $I_\mu$, we calculate
\begin{align*}
	F_{\mu\mu}(\bar{w}, \bar{w})
	&\overset{\eqref{eq_Fmunu}}{=}
	\frac{p-2}{2p} \left(
	\frac{\left[\int_{\R^N} p^\prime \, h(x, \bar{w}, \bar{w}) \: \mathrm{d}x\right]^\frac{1}{p^\prime}}
	{\left[\int_{\R^N} \bar{w} \Psi_\mu \ast \bar{w} 
	+ \bar{w} \Psi_\mu \ast \bar{w} \: \mathrm{d}x\right]_+^\frac{1}{2}}
	\right)^\frac{2p}{p-2}
	\\
	&\overset{\text{Lem. }\ref{lem_prop_hb} (d)}{=}
	\frac{p-2}{2p} \left(
	\frac{\left[\int_{\R^N} 2 (1 + b)^{1-p'} a(x)^{1-p'} |\bar{w}(x)|^{p'} \: \mathrm{d}x\right]^\frac{1}{p^\prime}}
	{\left[2 \, \int_{\R^N} \bar{w} \Psi_\mu \ast \bar{w}\right]_+^\frac{1}{2}}
	\right)^\frac{2p}{p-2}
	\\
	&=
	\frac{2}{(1+b)^\frac{2}{p-2}} \:
	\frac{p-2}{2p} \left(
	\frac{\left[\int_{\R^N} a(x)^{1-p'} |\bar{w}(x)|^{p'} \: \mathrm{d}x\right]^\frac{1}{p^\prime}}
	{\left[\int_{\R^N} \bar{w} \Psi_\mu \ast \bar{w}\right]_+^\frac{1}{2}}
	\right)^\frac{2p}{p-2}
	\\
	&\overset{\eqref{eq_Emu}}{=}
	\frac{2}{(1+b)^\frac{2}{p-2}} \: E_\mu (\bar{w})
	\\
	&= E_\mu (\bar{w})
	\\
	&= c_\mu
\end{align*}
and $F_{\mu\mu}(\bar{w}, 0) = E_\mu(\bar{w}) = c_\mu$. Hence,
$F_{\mu\mu}(\bar{w}, \bar{w}) = F_{\mu\mu}(\bar{w}, 0) = c_{\mu\mu}$
and by Lemma~\ref{lem_mplevel}, this provides (up to multiplication with suitable constants) both a semitrivial and a fully nontrivial dual ground state of $J_{\mu\mu}$. 
\end{proof}

\section{Appendix}

\subsection{Proof of Proposition~\ref{prop_fbhb}}
	
	Fix $x \in \R^N$ and recall for $s, t \in \R$ 
	\begin{align*}
		f(x, s, t) = \frac{a(x)}{p} \left( |s|^p + 2 b(x) \: |s|^\frac{p}{2} |t|^\frac{p}{2} + |t|^p \right).
	\end{align*}
	Differentiability and co-finiteness of $f(x, \:\cdot\:, \:\cdot\: )$ 
	are a straightforward consequence of the assumption $p > 2$. 
	We will show below that $f(x, \:\cdot\:, \:\cdot\: )$ is strictly convex;
	with that, the existence and the asserted properties of the Legendre transform $h(x, \:\cdot\:, \:\cdot\: )$ 
	of $f(x, \:\cdot\:, \:\cdot\: )$	are guaranteed by Theorem~\ref{thm_diffb}.
	To verify strict convexity, we show that, for all $s_1, s_2, t_1, t_2 \in \R$
	with $s_2 \neq 0$ or $t_2 \neq 0$,
\begin{align}\label{eq_convex}
	f(x, s_1 + s_2, t_1 + t_2) > f(x, s_1, t_1) 
	+ s_2 \: \partial_s f(x, s_1, t_1) 
	+ t_2 \: \partial_t f(x, s_1, t_1).
\end{align}	
	We denote the difference by 
	\begin{align*}
		\mathcal{I} := 
		f(x, s_1 + s_2, t_1 + t_2) - \bigg[ 
		f(x, s_1, t_1) 
	+ s_2 \: \partial_s f(x, s_1, t_1) 
	+ t_2 \: \partial_t f(x, s_1, t_1)
	\bigg].
	\end{align*}
	So if we prove $\mathcal{I} > 0$, we conclude~\eqref{eq_convex}. 
	We introduce the line segment
	\begin{align*}
		\ell := \left\{ (s_1, t_1) + \theta (s_2, t_2) \in \R^2:
		\: \: 0 \leq \theta \leq 1 \right\}.
	\end{align*}		
	
	\begin{itemize}
	\item[(I)] 
	Let us assume that $\ell$ is a subset of either of the sets	
	\begin{equation}\label{eq_append_sets}
	\begin{split}
		&\{ (s, 0)\in \R^2: \: s \in \R \},
		\quad
		\{ (0, t)\in \R^2: \: t \in \R \},
		\\
		&\{ (s, s)\in \R^2: \: s \in \R \}, 
		\quad
		\{ (s, -s)\in \R^2: \: s \in \R \}.
	\end{split}
	\end{equation}
	We then conclude $\mathcal{I} > 0$ since the functions
	\begin{align*}
		&s \mapsto f(x, s, 0) = \frac{a(x)}{p} |s|^p,
		\quad
		&&t \mapsto f(x, 0, t) = \frac{a(x)}{p} |t|^p,
		\\
		&s \mapsto f(x, s, s) = \frac{2a(x)(1 + b(x))}{p} |s|^p, 
		\quad
		&&s \mapsto f(x, s, -s) = \frac{2a(x)(1 + b(x))}{p} |s|^p,
	\end{align*}
	respectively, are strictly convex.
	
	\item[(II)]
	We now assume that $\ell$ intersects none of the sets in~\eqref{eq_append_sets}. 
	Then $f$ is twice continuously differentiable in a neighborhood of $\ell$, and the Fundamental Theorem
	of Calculus yields the integral representation 
	\begin{equation}\label{eq_append_convex}
		\mathcal{I} = 
		\int_0^1 \int_0^1 \tau \:
		( s_2, t_2)
		D^2_{s,t} f(x, s_1 + \tau\sigma s_2, t_1 + \tau\sigma t_2)
		\begin{pmatrix} s_2 \\ t_2	\end{pmatrix} 
		\: \mathrm{d}\sigma \: \mathrm{d}\tau.
	\end{equation}
	We show that the Hessian $D^2_{s, t} f(x, s, t)$ is strictly positive definite for all $(s, t) \in \ell$.

	\absatz	
	
	\noindent
	Let $(s, t) \in \ell$, i.e. in particular $s \neq 0$, $t \neq 0$ and $|s| \neq |t|$.
	Recall that we assume $a(x) > 0$ and $0 \leq b(x) \leq p-1$.
	We calculate the trace and the determinant of the Hessian:
	\begin{align*}
		&\text{tr } D^2_{s, t} f(x, s, t)
		\\
		&\quad
		= a(x) (p-1) \left(|s|^{p-2} + |t|^{p-2}\right) 
		+ a(x) \frac{b(x)}{2} (p-2) \left(|s|^{\frac{p}{2} - 2} |t|^\frac{p}{2} 
		+ |t|^{\frac{p}{2} - 2} |s|^\frac{p}{2} \right),
		\\
		&\det D^2_{s, t} f(x, s, t)
		\\
		&\quad 
		=	a(x)^2 (p-1) \left[ 
		( p-1-b(x)^2 ) |s|^{\frac{p}{2}} |t|^{\frac{p}{2}}
		+ \frac{b(x)}{2} (p-2) \big( |s|^p + |t|^p \big)  
		\right] |s|^{\frac{p}{2} - 2} |t|^{\frac{p}{2} - 2} .
	\end{align*}
	Since $a(x) > 0$ $b(x) \geq 0$, $s \neq 0$ and $t \neq 0$, we always have $\text{tr } D^2_{s, t} f(x, s, t) > 0$. 
	If $0 \leq b(x) \leq \sqrt{p - 1}$, we infer $\det D^2_{s, t} f(x, s, t) > 0$ and 
	hence $D^2_{s, t} f(s, t)$ is strictly positive semidefinite. 
	Else if $\sqrt{p-1} < b(x) \leq p-1$, we recall that $|s| \neq |t|$ by assumption on $\ell$,
	which gives the strict  estimate $|s|^\frac{p}{2} |t|^\frac{p}{2} < \frac{1}{2} \big( |s|^p + |t|^p \big)$.
	Thus,
	\begin{align*}
		\det D^2_{s, t} f(x, s, t)
		& >
		a(x)^2 \frac{p-1}{2} \big((p-1-b(x)^2) + b(x)(p - 2)\big) 
		\big( |s|^p + |t|^p \big) |s|^{\frac{p}{2} - 2} |t|^{\frac{p}{2} - 2}
		\\
		&=
		a(x)^2 \frac{(p-1)(b(x)+1)}{2} (p-1-b(x)) 
		\big( |s|^p + |t|^p \big) |s|^{\frac{p}{2} - 2} |t|^{\frac{p}{2} - 2}
		\\
		& \geq 0,
	\end{align*}
	which proves strict positive definiteness of $D^2_{s, t} f(x, s, t)$.
	
	\item[(III)]
	Finally, in all remaining cases, $\ell$ intersects the sets of~\eqref{eq_append_sets} in at most 
	finitely many points. Then still, the integral in~\eqref{eq_append_convex} converges,
	the integral representation from~\eqref{eq_append_convex} holds 
	and the previous step gives $\mathcal{I} > 0$.
	\end{itemize}
	
	Hence, $f(x, \:\cdot\:, \:\cdot\: )$ is strictly convex, which concludes the proof.
\qed

\subsection{Proof of Lemma~\ref{lem_prop_hb}}
For $\bar{s}, \bar{t} \in \R$, we recall the definition of the Legendre transform:
\begin{align*}
	h(x, \bar{s}, \bar{t}) = \sup_{s, t \in \R} \bigg( 
		s\bar{s} + t\bar{t} - f(x, s, t)
	\bigg)
\end{align*}
where $f(x, s, t) = \frac{a(x)}{p} \left(|s|^p + 2b(x) \: |s|^\frac{p}{2}|t|^\frac{p}{2} + |t|^p\right)$.
We note that, since $f(x, s, t) \geq 0$, this immediately yields $h(x, 0, 0) = 0$.
\begin{enumerate}[label=(\alph*)]
\item
We assume w.l.o.g. that $\bar{s} \neq 0$. 
With that, 
	\begin{align*}
		h(x, \bar{s}, \bar{t}) 
		&= \sup_{s, t \in \R} \bigg[ s \bar{s} + t \bar{t}	- f(x, s, t) \bigg]
		\overset{f(x, s, t) \geq 0}{=} \sup_{s, t > 0} \bigg[ s |\bar{s}| +  t |\bar{t}|	- f(x, s, t) \bigg]
		\\
		&= \sup_{s, \sigma > 0} \bigg[ s(|\bar{s}| + \sigma |\bar{t}|)	- s^p f(x, 1, \sigma) \bigg]
		= \sup_{\sigma > 0} \: \: \frac{1}{p^\prime} 
		\left( \frac{(|\bar{s}| + \sigma |\bar{t}|)^p}{p f(x, 1, \sigma)} \right)^\frac{1}{p-1}
		\\
		&	= \frac{a(x)^{1 - p^\prime}}{p^\prime} \left[\sup_{\sigma > 0} 
		\frac{|\bar{s}| + \sigma |\bar{t}|}{ (1 + 2b(x) \, \sigma^\frac{p}{2} + \sigma^p)^\frac{1}{p}}\right]^{p^\prime}
	\end{align*}
where the supremum with respect to $s > 0$ has been evaluated explicitly.
\item
 This is a direct consequence of the symmetry of $f(x, \:\cdot\:, \:\cdot\: )$, i.e. $f(x, s, t) = f(x, t, s)$
 and of the fact that $f(x, -s, t) = f(x, s, t)$, respectively, for all $s, t \in \R$.
\item
As a consequence of part (a), we have 
$
	h(x, \alpha \bar{s}, \alpha \bar{t}) = \alpha^{p'} h(x, \bar{s}, \bar{t})
$
for $\alpha > 0$.
We differentiate with respect to $\alpha$ and find
\begin{align*}
	\nabla_{\bar{s}, \bar{t}} h(x, \alpha \bar{s}, \alpha \bar{t}) \cdot \svec{\bar{s}}{\bar{t}} 
	= p' \alpha^{p' - 1} h(x, \bar{s}, \bar{t}).
\end{align*}
Evaluating the latter identity at $\alpha = 1$, the assertion of (c) is proved.
\item
We only prove the second identity, the first one can be shown in the same way. By direct computation we find
$\nabla_{s, t} f(x, s, s) = a(x) (1 + b(x)) |s|^{p-2} s\:  (1, 1)$ for $s \in \R$. Recalling that $\nabla_{\bar{s}, \bar{t}} h(x, \:\cdot\:, \:\cdot\: )$ is a diffeomorphism on $\R^2$ with inverse $\nabla_{s, t} f(x, \:\cdot\:, \:\cdot\: )$, this  implies $\nabla_{\bar{s}, \bar{t}} h(x, \bar{s}, \bar{s}) = \big(a(x) (1 + b(x))\big)^{-{(p^\prime - 1)}} |\bar{s}|^{p^\prime - 2} \bar{s} \:  (1, 1)$, and hence using (c)
\begin{align*}
	h(x, \bar{s}, \bar{s}) = 
	\frac{1}{p^\prime} \nabla_{\bar{s}, \bar{t}} h(x, \bar{s}, \bar{s}) \cdot \svec{\bar{s}}{\bar{s}} =
	\frac{2}{p^\prime} \big(a(x) (1 + b(x))\big)^{1-p^\prime} |\bar{s}|^{p^\prime}.
\end{align*}
\item
We have by definition of the Legendre transform and due to $a(x), b(x) \geq 0$
\begin{align*}
	h(x, \bar{s}, \bar{t}) 
	&
	= \sup_{s, t \in \R} \bigg( 
		s\bar{s} + t\bar{t} - \frac{a(x)}{p} \left(
			|s|^p + 2b(x) \: |s|^\frac{p}{2}|t|^\frac{p}{2} + |t|^p
		\right)
	\bigg)
	\\
	&
	\leq \sup_{s, t \in \R} \bigg( 
		s\bar{s} + t\bar{t} - \frac{a(x)}{p} \left(
			|s|^p  + |t|^p
		\right)
	\bigg)
	= \frac{1}{p^\prime} a(x)^{1 - p^\prime} \big( |\bar{s}|^{p^\prime} + |\bar{t}|^{p^\prime} \big)
\end{align*}
where we calculated the latter supremum explicitly.
On the other hand, defining $s_x \in \R$ via
\begin{align*}
		s_x &:= (a(x)(1+b(x)))^{1 - p^\prime} \cdot |\bar{s}|^{p^\prime - 2} \bar{s}
\end{align*}	
we notice that $s_x$ maximizes the map 
$\R \to \R, \: s \mapsto s \bar{s} - \frac{1}{p} a(x) (1 + b(x)) |s|^p$
and that $\bar{s} = a(x)(1 + b(x)) |s_x|^{p-2} s_x$. Defining $t_x \in \R$ similarly, we estimate 
\begin{align*}
		&\frac{1}{p^\prime} (a(x)(1+b(x)))^{1 - p^\prime} \left( |\bar{s}|^{p^\prime} + |\bar{t}|^{p^\prime}  \right)
		\\
		& \quad =
		\left( 1 - \frac{1}{p} \right) (s_x \bar{s} + t_x \bar{t}) 
		\\
		& \quad \leq
		s_x \bar{s} + t_x \bar{t} - \frac{1}{p} \left( s_x \cdot a(x)(1 + b(x)) |s_x|^{p-2} s_x
		+ t_x \cdot a(x)(1 + b(x)) |t_x|^{p-2} t_x \right)
		\\
		& \quad =
		s_x\bar{s} + t_x\bar{t} 
		- \frac{a(x)}{p} \left( |s_x|^p + 2b(x) |s_x|^\frac{p}{2} |t_x|^\frac{p}{2} + |t_x|^p \right)
		\\
		& \quad \leq
		\sup_{s, t \in \R} \left(
		s \bar{s} + t \bar{t} 
		- \frac{a(x)}{p} \left( |s|^p + 2b(x) |s|^\frac{p}{2} |t|^\frac{p}{2} + |t|^p \right)
		\right)
		\\
		& \quad = h(x, \bar{s}, \bar{t}).
\end{align*}
\end{enumerate}
\qed

\subsection{Proof of equation~\eqref{eq_auxiliary_h-below}}
\textit{
	Recall that $h_+$ was defined to be the Legendre transform of $f_+$ and
	$f_+(s, t) = \frac{1}{p} \left( |s|^p + 2 b_+ \: |s|^\frac{p}{2} |t|^\frac{p}{2} + |t|^p \right)$.
	For $\bar{u}, \bar{v} \in \L{p^\prime}$, we prove the inequality
	\begin{align*}
		\int_{\R^N} h(x, \bar{u}, \bar{v}) \: \mathrm{d}x 
		\geq h_+\left(\norm{a^{-1/p} \, \bar{u}}_{p^\prime}, 
		\norm{a^{-1/p} \, \bar{v}}_{p^\prime}\right).
	\end{align*}
}

	By definition of the Legendre transform, we have for $x \in \R^N$
	\begin{align*}
		h(x, \bar{u}(x), \bar{v}(x)) &= \sup_{s, t \in \R} \left[ s \bar{u}(x) + t \bar{v}(x) - f(x, s, t) \right].
	\end{align*}		
	In order to estimate the supremum, we insert explicitly
	\begin{align*}
		s_x &:= \frac{\sigma}{\norm{a^{-1/p} \, \bar{u}}_{p^\prime}^{p^\prime - 1}} 
		\cdot a(x)^{1 - p^\prime} |\bar{u}(x)|^{p^\prime - 2} \bar{u}(x),
		\\	
		t_x &:= \frac{\tau}{\norm{a^{-1/p} \, \bar{v}}_{p^\prime}^{p^\prime - 1}} 
		\cdot a(x)^{1 - p^\prime} |\bar{v}(x)|^{p^\prime - 2} \bar{v}(x)			
	\end{align*}
	where $\sigma, \tau \in \R$ are arbitrary. 
	With that, we integrate, estimate $b(x) \leq b_+$ and apply Hölder's inequality:
	\begin{align*}
		&\int_{\R^N} h(x, \bar{u}(x), \bar{v}(x)) \: \mathrm{d}x
		\\
		&\quad \geq
		\int_{\R^N} s_x \bar{u}(x) + t_x \bar{v}(x) - 
		\frac{a(x)}{p} \left( |s_x|^p + 2b(x) |s_x|^\frac{p}{2} |t_x|^\frac{p}{2} + |t_x|^p \right) \: \mathrm{d}x
		\\		
		&\quad  =
		\sigma \norm{a^{-1/p} \, \bar{u}}_{p^\prime}
		+
		\tau \norm{a^{-1/p} \, \bar{v}}_{p^\prime}
		\\*
		& \quad \quad
		- \frac{1}{p} \left(
		|\sigma|^p + 
		2 b_+ \frac{|\sigma \tau|^\frac{p}{2}}
		{\norm{a^{-1/p} \, \bar{u}}_{p^\prime}^\frac{p^\prime}{2} 
		\norm{a^{-1/p} \, \bar{v}}_{p^\prime}^\frac{p^\prime}{2}}
		\cdot
		\int_{\R^N}
		\left(a(x)^{-\frac{1}{p}}|\bar{u}|\right)^\frac{p^\prime}{2}
		\left(a(x)^{-\frac{1}{p}}|\bar{v}|\right)^\frac{p^\prime}{2}
		\: \mathrm{d}x
		+ |\tau|^p
		\right)
		\\
		& \quad \geq
		\sigma \norm{a^{-1/p} \, \bar{u}}_{p^\prime}
		+
		\tau \norm{a^{-1/p} \, \bar{v}}_{p^\prime}
		- \frac{1}{p} \left(
		|\sigma|^p + 
		2 b_+ |\sigma \tau|^\frac{p}{2}
		+ |\tau|^p
		\right)
		\\
	 & \quad	=
		\sigma \norm{a^{-1/p} \, \bar{u}}_{p^\prime}
		+
		\tau \norm{a^{-1/p} \, \bar{v}}_{p^\prime}
		- f_+(\sigma, \tau).
	\end{align*}
	Passing to the supremum with respect to $\sigma, \tau \in \R$, we find the asserted inequality.
\qed	
	
\subsection{Proof of equation~\eqref{eq_auxiliary_mini}}

\textit{
Let $p > 4$ and consider, for $\eta \geq 0$, 
\begin{align*}
	\psi(\eta) := \frac{(1 + \eta^2)^\frac{1}{2}}{(1 + (2^\frac{p}{2} - 2) \eta^\frac{p}{2} + \eta^p)^\frac{1}{p}}.
\end{align*}
We assert that $\psi$ has exactly three critical points on $(0, \infty)$ which are given by 
$\{ \eta_1, 1, \eta_1^{-1} \}$ for some $\eta_1 \in (0, 1)$, and that $\psi$ attains its minimum on $(0, \infty)$
uniquely at $\eta_0 = 1$. 
}
\absatz

	We note that $\psi$ is smooth on $(0, \infty)$, 
	and that $\psi(\eta) \to 1$ as $\eta \searrow 0$ or $\eta \nearrow \infty$. Moreover, 
	$\psi(\eta^{-1}) = \psi(\eta)$ holds for all $\eta > 0$. 
	Critical points of $\psi$ satisfy 
	\begin{subequations}
	\begin{align}\label{eq_append_psicrit_a}
		0 = \psi'(\eta), 
		\text{   equivalently   }	
		1 + (2^\frac{p-2}{2} - 1) \eta^\frac{p}{2} = \eta^{p-2} + (2^\frac{p-2}{2} - 1) \eta^\frac{p-4}{2}.
	\end{align}	  
	Obviously, this is satisfied for $\eta_0 := 1$. Moreover, $p > 4$ implies that
	$\psi''(1) = \frac{2^\frac{p}{2} - p}{2 \cdot 2^\frac{p}{2}} > 0$, 
	which proves that $\psi(1) = 1$ is a strict local minimum.
	Once we have established that $\psi$ has a unique critical point $\eta_1$ in the interval $(0, 1)$, 
	we colclude that $\psi$ attains local maxima at $\eta_1$ and at $\eta_1^{-1}$ 
	and hence that the local minimum in $\eta_0 = 1$ is in fact global.
	
	We substitute $\kappa := 2^\frac{p-2}{2} - 1 (> 1)$, 
	$\sigma := \frac{p-4}{p} \in (0, 1)$, $y := \eta^\frac{p}{2}$ and~\eqref{eq_append_psicrit_a} gives
	\begin{align}\label{eq_append_psicrit_b}
		0 = \psi'\left( y^\frac{2}{p} \right)
		\quad \Leftrightarrow \quad 
		\frac{1 + \kappa y}{\kappa + y} = y^\sigma.
	\end{align}
	\end{subequations}
	
	\textit{Existence of $\eta_1$}:
	This is guaranteed by the Mean Value Theorem since $\psi (0) = \psi (1) = 1$.
	
	\textit{Uniqueness of $\eta_1$}:
	Now assume that $\psi$ possesses (at least) two critical points $\eta_1, \eta_2$ in $(0, 1)$ 
	with $0 < \eta_1 < \eta_2 < 1$; 
	then $\frac{1}{\eta_2}, \frac{1}{\eta_1} \in (1, \infty)$ are two more critical points. 
	We denote $y_j := \eta_j^\frac{p}{2}$ for $j = 0, 1, 2$. Notice that, by~\eqref{eq_append_psicrit_b}, 
	we have
	\begin{align*}
		\frac{1 + \kappa y}{\kappa + y} - y^\sigma = 0
		\quad
		\text{for }
		y \in \left\{ y_1, y_2, 1, \frac{1}{y_1}, \frac{1}{y_2} \right\}.
	\end{align*}
	The Mean Value Theorem yields 
	$z_1 \in (y_1, y_2), z_2 \in (y_2, 1), 
	z_3 \in \left(1, \frac{1}{y_2}\right), z_4 \in \left(\frac{1}{y_2}, \frac{1}{y_1}\right)$ with
	\begin{align*}
		\frac{\mathrm{d}}{\mathrm{d}y}{\bigg|_{y = z_j}} 
		\left( \frac{1 + \kappa y}{\kappa + y} -  y^\sigma \right) = 0,
		\quad \text{equivalently} \quad  
		\sqrt{\frac{\sigma}{\kappa^2 - 1}} (\kappa + z_j) = z_j^{\frac{1 - \sigma}{2}}.
	\end{align*}	  
	Then again, we find $z^\ast_1 \in (z_1, z_2)$ and $z^\ast_2 \in (z_3, z_4)$ satisfying
	\begin{align*}
		\frac{\mathrm{d}}{\mathrm{d}y}{\bigg|_{y = z_j^\ast}}
		 \left(\sqrt{\frac{\sigma}{\kappa^2 - 1}} (\kappa + y) -  y^{\frac{1 - \sigma}{2}} \right) = 0,
		\quad \text{equivalently} \quad  
		\frac{(1 - \sigma)^2 ( \kappa^2 - 1)}{4\sigma} = (z_j^\ast)^{\sigma + 1}.
	\end{align*}
	The latter equation, however, possesses a unique positive solution;
	since we have found two distinct ones $z_1^\ast \in (0, 1), z_2^\ast \in (1, \infty)$, we have a
	contradiction. 

\qed

\section*{Acknowledgements}
The authors thank Gilles Ev\'{e}quoz (University of Frankfurt) for stimulating discussions and explanations about the subject as well as for the improvements explained in Remark~\ref{rmk_restriction}. They would also like to express their gratitude to the reviewer for thorough revision and helpful comments. Both authors gratefully acknowledge financial support by the Deutsche Forschungs\-gemeinschaft (DFG) through CRC 1173 ''Wave phenomena: analysis and numerics''. 

\bibliographystyle{abbrv}	
\bibliography{MandelScheider_Helmholtz_Literatur} 
  
\end{document}